\documentclass[A4paper,12pt]{article}
\usepackage[utf8]{inputenc}
\usepackage{tikz}

\usepackage{amsmath,amsthm,amscd,amssymb,eucal,mathrsfs}
\usepackage{amsfonts}
\usepackage{latexsym}
\usepackage{graphicx} 
\usepackage{fullpage}
\usepackage{multicol}
\usepackage{dsfont}
\usepackage{natbib}
\usepackage[all]{xy}
\usepackage{multirow}
\usepackage{color}

\newtheorem{thm}{Theorem}[section]

\newtheorem{cor}[thm]{Corollary}
\newtheorem{remark}[thm]{Remark}
\newtheorem{definition}[thm]{Definition}

\newtheorem{example}[thm]{Example}

\newtheorem{Proposition}[thm]{Proposition}

\newtheorem*{Satz*}{Satz}

\newtheorem{Lemma}[thm]{Lemma}

\newcommand{\mathset}[1]{{\left\{#1\right\}}}
\newcommand{\absolute}[1]{\left\lvert#1\right\rvert}
\newcommand{\norm}[1]{\left\|#1\right\|}

\DeclareMathOperator{\closure}{cl}

\DeclareMathOperator{\dom}{dom}

\DeclareMathOperator{\supp}{supp}

\title{Neumann Boundary Value Problems on $p$-Adic Analytic Manifolds}
\author{Patrick Erik Bradley}
\date{\today}

\begin{document}

\maketitle

\begin{abstract}
Inspired by graph theory, Neumann Boundary Value Problems on $p$-adic analytic manifold are formulated. Existence and uniqueness of weak solutions is proven in the ultracontractive case. For this task, a $p$-adic Gauss-Green identity and an extended eigenvalue formula for wavelets with small supports on the manifold are proven. As an application, the closeness to a zero of an analytic function on the manifold is shown to be detectable via a Neumann Boundary Value Problem.
\end{abstract}

\section{Introduction}

Boundary Value Problems have been studied for a long time, leading also to many different applications, cf.\ e.g.\ \cite{Taira2009} for applications in the theory of Markov processes. The ones we have in mind here apartain to the study of geometric properties of $p$-adic analytic manifolds through Laplacian integral operators, and their corresponding Neumann Boundary Value Problems. Over $p$-adic domains, boundary value problems  are only recently coming into the focus of research, highlighted with Kochubei's study of $p$-adic Dirichlet problems in the context of H\"older regularity of boundaries \cite{Kochubei2023}. Other such research deals with boundary value problems in mathematical physics formulated over the Bruhat-Tits tree \cite{GJT2019,HJ2026}. Elliptic operators over compact domains in $p$-adic $n$-space lead the author to formulate boundary value problems as extensions of corresponding ones in graph theory \cite{EllipticBVP_p}. 
The construction of Laplacian integral operators on compact $p$-adic analytic manifolds in \cite{DiffMfp,HearingSerre} opened a path towards novel boundary value problems on such domains \cite{bvp_mfp}. Here, it is Neumann-type boundary conditions which are now being focused on for the first time, at least to the author's awareness.  
\newline

The goal of this article is to locate zeros of a regular analytic function $f\colon X\to K$ on a $p$-adic analytic $n$-manifold $X$ defined over a non-archimedean local field $K$ via solving Neumann Boundary Value Problems. The idea is to state the boundary condition as having the $p$-adic analogue of a normal derivative introduced here, of a solution to the Laplace equation for a certain integral operator equal to a wavelet supported on a ball centred in a point $a\in X$ near a zero of $f$. In particular the rate of decay of the solution will be studied as $a$ moves closer to the zero. It turns out that this can be detected, as well as the order of the zero of $f$ which the point $a$ is approaching, and also if $a$ is not moving closer to a zero. This is the final theorem of this article, called \emph{Closeness to zeros}. 
\newline

The main theoretical result is that a $p$-adic Laplacian integral operator whose associated semigroup is ultracontractive, then the Neumann Boundary Value Problem (NBVP) for a pair $(\phi,\Omega)$ consisting of an $L^\infty$-function $\phi$ on the boundary of an open subset $\Omega$ of $X$ (the boundary condition), such that $\phi$ has zero mean, has a unique weak solution, which can be obtained via integrating   $\phi$ times the Green function associated with the operator,  over the boundary of $\Omega$. This is Theorem 5.6.  
\newline

The first link between the two results is provided by a $p$-adic Gauss-Green identity which links  Dirichlet forms defined using an open $\Omega\subseteq X$ and its graphon-theoretic closure with its associated operator and normal derivative, cf.\ Lemma 3.3. 
\newline

The second link is given
by an eigenvalue formula for wavelets with small support,  extending the one in \cite{DiffMfp} and \cite{HearingSerre} by including the absolute value of $f$ in the measure. This is a generalisation in that now the differential $n$-form giving the Radon measure on $X$ is ot everywhere non-vanishing, cf.\ Proposition 6.1.
\newline

Some notation is in order: $K$ is a non-archimedean local field with ring of integers $O_K$, and whose Haar measure is $\mu$, appearing as  $\absolute{dx}$ or $\absolute{dy}$  in integrals (but not only). The uniformiser of $K$ is denoted as $\pi\in O_K$.
\newline

The following Section 2 recalls some notions from $p$-adic analytic manifolds. This is followed by Section 3 which proves the $p$-adic Gauss Green identity. Section 4 formulates and proves the NBVP of this article. Section 5 studies the heat kernel and expresses the Green function in terms of eigenfunctions in the ultracontractive case. The last Section 6 is devoted to the final result about detecting closeness to zeros of an analytic  function on $X$.

\section{Preliminaries on $p$-adic analytic manifolds}

Analytic manifolds can be  defined over a  non-archimedean local field $K$ in an analogous manner as over the reals, taking into account that the transition functions between overlapping charts are $K$-analytic. This is laid out in e.g.\ \cite{Igusa2001,Serre1992,Schneider2011,WeilAAG}. The book \cite{Taira1998} about manifolds over the reals is also helpful.
\newline

In order to be able to construct integral Laplacian operators over their real- and complex-valued function spaces, it was found useful to use a connected nerve complex coming from a suitable atlas, which in the compact case can be assumed finite \cite{DiffMfp}.
In order to be able to define an analogon of geodetic distance (called $p$-adic geodetic distance), a Radon measure given by a nowhere vanishing analytic differential $n$-form is useful, where $n$ is the (constant) dimension of the $p$-adic analytic manifold $X$.
If $X$ is compact, then it is known that such an $n$-form does exist, cf.\ \cite[Th\'eor\`eme (2)]{Serre1965}.
\newline

The reason why we are not giving an explicit recall of the definition of $p$-adic geodetic distance, is that it is not explicitly used in this article. The only thing to keep in mind about it, is that it is locally $p$-adic distance given on charts by the maximum norm on $K^n$. Not every $p$-adic analytic manifold allows this, but if it has an integral structure (as introduced e.g.\ in \cite{BKL2026}), then the transition maps between overlapping charts take balls to balls of equal radius. And that is very helpful for defining the
$p$-adic geodetic distance. In \cite{DiffMfp}, this was done in the compact case with the so-called \emph{equalising} property of transition maps. Integral structures for this task are used in \cite{brad_Tamagawa_p}.
\newline

The philosophy in much of the author's work is to generalise notions from graph theory to the theory of $p$-adic analytic manifolds.
For example, weighted edges on a graph are given by discrete kernel functions, and thus here, we take more general kernel functions with input data $X\times X$, where $X$ is a $p$-adic analytic manifold. We call this structure a \emph{$p$-adic graphon}. And a $p$-adic graphon is connected, if any two points $x,y\in X$ can be connected by a sequence $x=x_0,x_1,\dots,x_n=y$ with
\[
w(x_i,x_{i+1})\neq0
\]
for $i=0,\dots,n-1$, where $w(x,y)$ is the kernel function at hand. Here, the article \cite{HS2022} turned out quite helpful for this task.
\newline

A note on the
varying sloppiness in the notation of $n$-forms on charts, in particular when integrating, is in order: the reference \cite{HearingSerre} explains various kinds of more or less sloppy notation, in the sense that the chart maps are fully, partially, or not at all included in the notation. An example attempting to be readable for beginners is \cite{DiffMfp}, whereas the usual fully sloppy notation is preferred in \cite{brad_Tamagawa_p}, as is quite often found among  number theorists. Since we have learned in \cite{HearingSerre} how to read that kind of notation, we will employ it here, unless we see that it is indeed helpful for comprehension of a proof.

\section{A $p$-adic Gauss-Green Identity}

Let $X$ be a compact $p$-adic analytic manifold with integral structure $\Lambda$, $\mathcal{A}$ a finite 
$O_K$-compatible atlas on $X$ giving rise to $\Lambda$, and such that the nerve complex 
$N(A)$ is connected. 
Let $w(x,y) \ge 0$ be a symmetric and non-negative kernel function on $X$ which can be thought of
in many cases as
\[
w(x, y) = g(d_\Lambda(x,y))
\]
for $x,y\in X$, $d_\Lambda(x,y)$ the $p$-adic geodetic distance, and in any case is locally constant on $X^2$ outside the diagonal. Also, in the general case, it is assumed that $w(x,y)$ defines a connected graphon structure on $X$.
\newline

In the following, Dirichlet forms on spaces of complex-valued  functions on the $p$-adic analytic manifold $X$ are used. An exposition of a general theory of Dirichlet forms can be found in \cite{Fukushima1980}.
\newline

The following Dirichlet form is of interest:
\begin{align}
\mathcal{E}_\Lambda(u,v)
&=
\frac12\int_X\int_X w(x, y)
\left(\overline{u(x)}- \overline{u(y)}\right)
(v(x) -v(y))\,d\mu_\Lambda(x)\,d\mu_\Lambda(y)
\\
&\stackrel{(*)}{=} \langle u, \Delta_w v\rangle_{L^2}
\end{align}
for $u,v\in L^2(X,\mu_\Lambda)$, and with Laplacian operator
\[
\Delta_w f (x) =
\int_X w(x, y)(f(x) - f (y))\,d\mu_\Lambda (y)
\]
with $f\in\mathcal{D}(X)$. The subscript $\Lambda$ indicates a dependence on the choice of
an integral structure $\Lambda$ on $X$. 
\newline

Notice that $(*)$ requires a proof.

\begin{Lemma}\label{DirichletLaplace}
It holds true that
$
\mathcal{E}_\Lambda(u,v)=\langle u,\Delta_wv\rangle_{L^2}
$
for $u,v\in L^2(X,\mu_\Lambda)$.
\end{Lemma}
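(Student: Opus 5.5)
The plan is to expand the product in the definition of $\mathcal{E}_\Lambda(u,v)$ into four terms and exploit the symmetry $w(x,y)=w(y,x)$ together with Fubini's theorem on $X\times X$ with the product measure $\mu_\Lambda\otimes\mu_\Lambda$. Writing
\[
\left(\overline{u(x)}-\overline{u(y)}\right)(v(x)-v(y))
=\overline{u(x)}\,(v(x)-v(y))-\overline{u(y)}\,(v(x)-v(y)),
\]
the double integral splits into two pieces. In the second piece I will interchange the roles of $x$ and $y$ (a measure-preserving change of variables on $X\times X$), which by symmetry of $w$ turns $-\overline{u(y)}(v(x)-v(y))$ into $\overline{u(x)}(v(x)-v(y))$. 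Hence both pieces are equal, the factor $\frac12$ cancels, and
\[
\mathcal{E}_\Lambda(u,v)=\int_X\overline{u(x)}\int_X w(x,y)\,(v(x)-v(y))\,d\mu_\Lambda(y)\,d\mu_\Lambda(x)=\langle u,\Delta_w v\rangle_{L^2}.
\]

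The real work is justifying these manipulations for $u,v\in L^2(X,\mu_\Lambda)$ rather than for test functions in $\mathcal{D}(X)$. For this I will first treat $u,v\in\mathcal{D}(X)$, i.e.\ locally constant functions on the compact $X$. Then $v(x)-v(y)$ vanishes on a neighbourhood of the diagonal. Since $w$ is locally constant off the diagonal, it is bounded on the compact set where $v(x)\neq v(y)$. The integrand is therefore bounded on $X\times X$, $\mu_\Lambda(X)<\infty$, Fubini applies, and the computation above is rigorous.

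For general $u,v\in L^2$ I will argue by density of $\mathcal{D}(X)$ in $L^2(X,\mu_\Lambda)$. This is the main obstacle, because it needs some integrability of $w$: the identity is only meaningful when $\Delta_w v\in L^2$, i.e.\ for $v$ in the domain of the form. My first attempt will assume, as is implicit in the paper's setup, that $\sup_x\int_X w(x,y)\,d\mu_\Lambda(y)<\infty$. Under that assumption $\Delta_w$ is a bounded operator on $L^2$ (by a Schur test, using symmetry of $w$), and $\mathcal{E}_\Lambda$ is a bounded sesquilinear form by Cauchy--Schwarz with respect to the measure $w\,d\mu_\Lambda\otimes d\mu_\Lambda$. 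Both sides are then continuous in $(u,v)$, and the identity extends from $\mathcal{D}(X)$ to $L^2$.

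If $w$ is singular on the diagonal (e.g.\ $g(d_\Lambda)$ blowing up), I will instead read the statement as holding for $u\in L^2$ and $v$ in the domain where $\Delta_w v\in L^2$, with $\mathcal{E}_\Lambda(v,v)<\infty$. In that case I will carry out the symmetrisation on the truncated kernels $w_\epsilon=w\cdot 1_{d_\Lambda(x,y)\ge\epsilon}$, where everything is bounded and the computation is exact. I then pass to the limit $\epsilon\to0$ by monotone/dominated convergence, using $\absolute{\overline{u(x)}-\overline{u(y)}}\,\absolute{v(x)-v(y)}\,w\in L^1$ from Cauchy--Schwarz applied to the two finite energies.
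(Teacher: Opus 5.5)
Your core computation is the paper's proof run in the other direction. The paper takes $u,v\in\dom(\mathcal{E}_\Lambda)$ (not all of $L^2$), writes $\langle u,\Delta_wv\rangle_{L^2}$ as half of two copies, relabels $x\leftrightarrow y$ in the second, and recombines using $w(x,y)=w(y,x)$. It never checks that Fubini applies. Your first two steps are correct and supply exactly that missing justification. For $u,v\in\mathcal{D}(X)$ the integrand vanishes off the clopen set $\mathset{v(x)\neq v(y)}$, which avoids the diagonal and on which $w$ is bounded. Under $\sup_x\int_Xw(x,y)\,d\mu_\Lambda(y)<\infty$ both sides are bounded sesquilinear forms on $L^2$, so density finishes.

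The gap is in the singular case (e.g.\ $w=d_\Lambda^{-n\alpha}$ with $\alpha\ge1$), and it has two parts.

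First, the Cauchy--Schwarz bound that makes $w\,\absolute{u(x)-u(y)}\,\absolute{v(x)-v(y)}$ integrable needs $\mathcal{E}_\Lambda(u)<\infty$ as well. So $u\in L^2$ must be strengthened to $u\in\dom(\mathcal{E}_\Lambda)$.

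Second, and more seriously, that majorant only controls the energy side: it gives convergence of the truncated forms to $\mathcal{E}_\Lambda(u,v)$. On the operator side you need $\langle u,\Delta_{w_\epsilon}v\rangle_{L^2}\to\langle u,\Delta_wv\rangle_{L^2}$, i.e.\ weak $L^2$-convergence $\Delta_{w_\epsilon}v\to\Delta_wv$. Your stated hypotheses do not deliver this.
\begin{itemize}
\item You have not fixed what $\Delta_wv$ means for non-locally-constant $v$: an absolutely convergent integral, a principal value, or a weak object.
\item The condition $\Delta_wv\in L^2$ describes the operator domain, not the form domain as you say.
\item Even a.e.\ convergence of $\Delta_{w_\epsilon}v(x)$ would need an extra majorant, such as $\absolute{u(x)}\int_Xw(x,y)\absolute{v(x)-v(y)}\,d\mu_\Lambda(y)\in L^1(X,\mu_\Lambda)$. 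That is an additional assumption on $v$.
\end{itemize}

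A clean repair uses the argument of your first step. Since $\varphi(x)-\varphi(y)$ vanishes near the diagonal, $\mathcal{E}_\Lambda(\varphi,v)=\langle\Delta_w\varphi,v\rangle_{L^2}$ holds for every $\varphi\in\mathcal{D}(X)$ and every $v\in L^2$. Define $\Delta_wv$ weakly through this identity. Then, for $v\in\dom(\mathcal{E}_\Lambda)$ with weak $\Delta_wv\in L^2$, extend the identity in $u$ from $\mathcal{D}(X)$ to $\dom(\mathcal{E}_\Lambda)$, provided $\mathcal{D}(X)$ is dense there in the energy norm.

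The paper does not address any of this either. The defect lies in the extension you added, not in the symmetrisation identity you share with the paper.
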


\begin{proof}
The proof extends from the discrete setting
in \cite[Proposition 1.12]{JP2013} as follows:
Let $u,v\in\dom(\mathcal{E}_\Lambda)$. Then
\begin{align*}
\langle u,\Delta_w v\rangle_{L^2}&=\frac12\int_X\overline{u(x)}\Delta_wv(x)\,d\mu_\Lambda(x)+\frac12\int_X\overline{u(y)}\Delta_wv(y)\,d\mu_\Lambda(y)
\\
&=\frac12\int_X\int_X\overline{u(x)}w(x,y)(v(x)-v(y))\,d\mu_\Lambda(y)\,d\mu_\Lambda(x)
\\
&-\frac12\int_X\int_X\overline{u(y)}w(x,y)(v(x)-v(y))\,d\mu_\Lambda(x)\,d\mu_\Lambda(y)
\\
&=\frac12\int_X\int_Xw(x,y)\left(\overline{u(x)}-\overline{u(y)}\right)
(v(x)-v(y))\,d\mu_\Lambda(y)\,d\mu_\Lambda(x)
\\
&=\mathcal{E}_\Lambda(u,v)\,,
\end{align*}
as asserted.
\end{proof}

The following notation is standard:
\[
\mathcal{E}_\Lambda(u):=\mathcal{E}_\Lambda(u,u)
\]
for $u\in\dom(\mathcal{E}_\Lambda)$.
\newline

A \emph{contraction} of a function $u$ is a function $[u]$ such that
\[
\absolute{[u](x) - [u](y)}\le\absolute{u(x)-u(y)}\,,
\]
whenever $w(x, y)\neq 0$ 
with $x,y \in X$. A function 
$u \in L^2(X,\mu_\Lambda)$ is of \emph{finite
energy}, if 
$u\in\dom(\mathcal{E}_\Lambda)$, and 
$u$ is \emph{harmonic}, if 
$\Delta_w = 0$. 
A Dirichlet form $\mathcal{E}$ on
a Hilbert space $(H, \langle\cdot,\cdot\rangle_H)$ is \emph{closed}, if the norm
\[
\langle\cdot,\cdot\rangle_{\mathcal{E}}:=
\mathcal{E}(\cdot,\cdot)+\langle \cdot,\cdot\rangle_H
\]
defines a Hilbert space structure on $\dom(\mathcal{E})$.

\begin{Lemma}
The following statements hold true:
\begin{enumerate}
\item $\ker\mathcal{E}_\Lambda$ consists precisely of the constant functions $X\to\mathcal{C}$.
\item $\mathcal{E}_\Lambda(u, v) =\frac14 [\mathcal{E}_\Lambda(u + v) - \mathcal{E}_\Lambda(u - v)]$.
\item  If $[u]\in L^2(X,\mu_\Lambda)$ is a contraction of $u\in\dom(\mathcal{E}_\Lambda)$, then $\mathcal{E}_\Lambda[u]\le \mathcal{E}_\Lambda(u)$.
\item A harmonic function $h\in L^2 (X,\mu_\Lambda)$ of finite energy is constant.
\item The Dirichlet form $\mathcal{E}_\Lambda$ is closed.
\end{enumerate}
\end{Lemma}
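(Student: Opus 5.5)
I will prove the five items in order, since each later item leans on earlier ones.

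\emph{Item 1.} Write $\mathcal{E}_\Lambda(u)=\frac12\int_X\int_X w(x,y)\absolute{u(x)-u(y)}^2\,d\mu_\Lambda(x)\,d\mu_\Lambda(y)$. Constants clearly give zero. Conversely, suppose $\mathcal{E}_\Lambda(u)=0$. Then $u(x)=u(y)$ for $\mu_\Lambda\otimes\mu_\Lambda$-almost all pairs with $w(x,y)>0$. The set $\mathset{w>0}$ is open off the diagonal, because $w$ is locally constant there. So $u$ is almost everywhere constant on a neighbourhood of each point that has a $w$-neighbour.

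To pass to a global constant I use connectedness of the graphon. For a chain $x=x_0,\dots,x_n=y$ with $w(x_i,x_{i+1})\neq0$, local constancy gives product neighbourhoods $U_i\times U_{i+1}$ on which $w>0$. Hence the essential values of $u$ on $U_i$ and on $U_{i+1}$ coincide. Compactness of $X$ then reduces this to finitely many patches, which yields an a.e.\ constant function. This measure-theoretic chaining is the step I expect to be the main obstacle. To make it clean I would work with the open sets $U_i$ rather than with points, and invoke compactness.

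\emph{Item 2.} This is the polarisation identity. It follows by expanding the sesquilinear form. Taken literally, with complex scalars, the right-hand side gives only the real part. So I would either restrict to real-valued functions or add the terms $-\frac{i}{4}[\mathcal{E}_\Lambda(u+iv)-\mathcal{E}_\Lambda(u-iv)]$. For real $u,v$ it is a direct computation using the symmetry of $w$.

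\emph{Item 3.} Integrate the pointwise inequality $w(x,y)\absolute{[u](x)-[u](y)}^2\le w(x,y)\absolute{u(x)-u(y)}^2$. The inequality holds where $w\neq0$ by the definition of a contraction, and trivially elsewhere.

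\emph{Item 4.} If $\Delta_w h=0$, then Lemma~\ref{DirichletLaplace} gives $\mathcal{E}_\Lambda(h)=\langle h,\Delta_w h\rangle=0$. Item 1 then shows that $h$ is constant.

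\emph{Item 5.} I will show that $\dom(\mathcal{E}_\Lambda)$ is complete for $\langle\cdot,\cdot\rangle_{\mathcal{E}}$. Take a Cauchy sequence $u_k$. It converges in $L^2$ to some $u$, and a subsequence converges almost everywhere. The functions $F_k(x,y)=\sqrt{w(x,y)/2}\,(u_k(x)-u_k(y))$ form a Cauchy sequence in $L^2(X\times X)$, so they converge to some $F$. Along the almost-everywhere subsequence, $F=\sqrt{w/2}\,(u(x)-u(y))$. By Fatou's lemma $u\in\dom(\mathcal{E}_\Lambda)$, and $\mathcal{E}_\Lambda(u_k-u)=\norm{F_k-F}^2\to0$.

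Alternatively, if $w$ is bounded, which is the case when it comes from $g(d_\Lambda)$ on compact $X$ with integrable singularity, the form is bounded on $L^2$. Closedness is then immediate.
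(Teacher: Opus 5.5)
Your items 1, 3 and 4 follow the paper's route: connectedness of the graphon for item 1, integrating the pointwise contraction inequality for item 3, and Lemma~\ref{DirichletLaplace} combined with item 1 for item 4. For item 1 you also supply the measure-theoretic chaining over open neighbourhoods that the paper compresses into ``follows from connectedness''.

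Your caveat on item 2 is correct and missing from the paper. Since $\mathcal{E}_\Lambda(v,u)=\overline{\mathcal{E}_\Lambda(u,v)}$, the stated right-hand side equals $\operatorname{Re}\mathcal{E}_\Lambda(u,v)$. For complex-valued functions the identity therefore needs the extra $i$-terms you write down.

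The genuine difference is in item 5.
\begin{itemize}
\item The paper imitates Evans' completeness proof for Sobolev spaces. It asserts that $u_n$ and $\Delta_w u_n$ are both Cauchy in $L^2$, and identifies the limits weakly against test functions $\phi\in\mathcal{D}(X)$.
\item You instead write $\mathcal{E}_\Lambda(u)$ as the squared $L^2(X\times X,\mu_\Lambda\otimes\mu_\Lambda)$-norm of the jump function $\sqrt{w/2}\,(u(x)-u(y))$. You then pass to the limit along an a.e.\ convergent subsequence.
\end{itemize}
Your version needs nothing about $\Delta_w$, test functions or self-adjointness. It works unchanged for singular kernels and gives $\mathcal{E}_\Lambda(u_k-u)\to0$ directly. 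The paper's version rests on control of $\norm{\Delta_w u_n}_{L^2}$. That is the graph norm of the operator, and it is not implied by being Cauchy for $\mathcal{E}_\Lambda(\cdot)+\norm{\cdot}^2_{L^2}$. So your argument is not just different: it avoids an unjustified step.

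Only your closing aside is off. A kernel with a singularity is not bounded. What makes the form bounded on $L^2$ is a condition such as $\sup_x\int_X w(x,y)\,d\mu_\Lambda(y)<\infty$, and this fails for $w=d_\Lambda^{-\alpha}$ with $\alpha\ge n$. In that case your main argument is the one needed.
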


Notice that $\mathcal{E}_\Lambda[u]$ is used here as a short-hand for $\mathcal{E}_\Lambda([u])$. Property 2.\ is
called \emph{polarisation}, and Property 3.\ \emph{Markov Property}.

\begin{proof}
1. This follows from the fact that the graphon defined by the kernel
function $w(x, y)$ is connected.

\smallskip\noindent
2. This is a straightforward calculation.

\smallskip\noindent
3. This is immediate.

\smallskip\noindent
4. This follows from Lemma \ref{DirichletLaplace} together with harmonicity as $\Delta_w h = 0$ and by
connectivity of the graphon $(X, w)$.

\smallskip\noindent
5. The proof first uses Lemma \ref{DirichletLaplace}, and then one observes that the following:
the map 
\[
\dom(\mathcal{E}_\Lambda)\to\mathds{R}_+\,,\;
u\mapsto\norm{u}_{\mathcal{E}_\Lambda}=
\langle u,u\rangle_{\mathcal{E}_\Lambda}
\]
is a norm. The completedness is shown as in \cite[Theorem 5.2.2]{Evans2010}. Namely, let $u_n\in\dom(\mathcal{E}_\Lambda)$ be a Cauchy sequence. Then $u_n,\Delta_wu_n$ are also a Cauchy sequence in $L^2(X,\mu_\Lambda)$. Thus, let $f_0,f_1\in L^2(X,\mu_\Lambda)$ be the limits of $u_n$, $\Delta_wu_n$, respectively. In order to see that $u\in\dom(\mathcal{E}_\Lambda)$, let $\phi\in\mathcal{D}(X)$. Then
\begin{align*}
\int_X\Delta_w^\ell u\phi\,d\mu_\Lambda&=\int_X u\Delta_w^\ell\phi\,d\mu_\Lambda
=\lim\limits_{n\to\infty}u_n\Delta_w^\ell\phi\,d\mu_\Lambda
=
\int_Xf_\ell\phi\,d\mu_\Lambda
\end{align*}
for $\ell=0,1$. This proves the completeness of $(\dom(\mathcal{E}_\Lambda),\norm{\cdot}_{\mathcal{E}_\Lambda})$. Hence, the Dirichlet form $\mathcal{E}_\Lambda$ is closed. All assertions are now proven.
\end{proof}

The main part of this article will be concerned with boundary value problems on an open subset $\Omega\subset X$. Using the kernel function $w(x,y)$ on $X$, allows to define the following subsets of $X$:
\begin{align*}
\delta_w\Omega&=\mathset{y\in X\setminus \Omega\mid\exists x\in\Omega\colon w(x,y)\neq 0}
\\
\closure_w\Omega&=\Omega\cup\delta_w\Omega\,.
\end{align*}
These are similar constructions as in e.g.\ \cite{EllipticBVP_p}.
\newline

Following ideas in \cite{Kasue2017}, define for an open $\Omega\subseteq X$ the following Dirichlet
form:
\[
\mathcal{E}_{\Omega,\closure_w\Omega}(u,v)=\frac12\int_\Omega\int_{\closure_w\Omega}w(x,y)\left(\overline{u(x)}-\overline{u(y)}\right)(v(x)-v(y))\,d\mu_\Lambda(y)\,d\mu_\Lambda(x)
\]
for $u,v\in L^2(\closure_w\Omega,\mu_\Lambda)$. The normal derivative w.r.t.\ $\delta_w\Omega$ is defined as
follows:
\[
N_{\delta_w\Omega}v(\xi)=\int_\Omega w(\xi,z)(v(\xi)-v(z))\,d\mu_\Lambda(z)
\]
for $\xi\in\delta_w\Omega$. 
Define the Dirichlet form
\[
\mathcal{E}_\Omega(u,v)=\frac12\int_\Omega\int_\Omega
w_\Omega(x,y)\left(\overline{u(x)}-\overline{u(y)}\right)(v(x)-v(y))\,d\mu_\Lambda(y)\,d\mu_\Lambda(x)
\]
and the
operator
\[
\Delta_\Omega u(x)=\int_X w_\Omega(x,y)(u(x)-u(v))\,d\mu_\Lambda(y)
\]
with kernel function
\[
w_\Omega\colon X\times X\to\mathds{R}_+\,,\;(x,y)\mapsto\begin{cases}
w(x,y),&x,y\in\closure_w\Omega
\\
0,&\text{otherwise}
\end{cases}
\]
for $u,v\in L^2(\Omega,\mu_\Lambda)$, where the same notation is used for the restricted measure.

\begin{Lemma}[$p$-adic Gauss-Green identity]\label{GaussGreen_p}
It holds true that
\begin{align*}
\mathcal{E}_{\Omega,\closure_w\Omega}(u,v)&=\int_\Omega\overline{u(x)}\Delta_\Omega v(x)\,d\mu_\Lambda(x)
+\int_{\delta_w\Omega}\overline{u(\xi)}N_{\delta_w\Omega}v(\xi)\,d\mu_\Lambda(\xi)
\\
&=\langle u,\Delta_\Omega v\rangle_{L^2(\Omega,\mu_\Lambda)}
+\langle u,N_{\delta_w\Omega}v\rangle_{L^2(\delta_w\Omega,\mu_\Lambda)}
\end{align*}
for $u,v\in\dom(\mathcal{E}_{\Omega,\closure_w\Omega})\subset L^2(\closure_w\Omega,\mu_\Lambda)$.
\end{Lemma}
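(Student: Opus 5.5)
The plan is to imitate the proof of Lemma \ref{DirichletLaplace}, that is, the discrete argument of \cite[Proposition 1.12]{JP2013}. The difference is that the domain of integration is now split according to the decomposition $\closure_w\Omega=\Omega\sqcup\delta_w\Omega$. Two preliminary observations come first. For $x\in\Omega$ and $y\notin\closure_w\Omega$ one has $w(x,y)=0$ by the very definition of $\delta_w\Omega$. Hence, for $x\in\Omega$,
\[
\Delta_\Omega v(x)=\int_{\closure_w\Omega}w(x,y)(v(x)-v(y))\,d\mu_\Lambda(y).
\]
Moreover, $w$ is symmetric and non-negative, and $u,v\in\dom(\mathcal{E}_{\Omega,\closure_w\Omega})$. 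Every integral below is therefore absolutely convergent, since by Cauchy--Schwarz each term is bounded by the energy. This licenses Fubini and the interchange of the names $x,y$ throughout.

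Next I expand $\left(\overline{u(x)}-\overline{u(y)}\right)(v(x)-v(y))$ as
\[
\overline{u(x)}(v(x)-v(y))+\overline{u(y)}(v(y)-v(x)).
\]
I then split the region of integration into the interior block $\Omega\times\Omega$ and the mixed block coupling $\Omega$ with $\delta_w\Omega$.

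On $\Omega\times\Omega$, exactly the symmetrisation of Lemma \ref{DirichletLaplace} applies: renaming $x\leftrightarrow y$ in the second summand shows that the two halves coincide. The factor $\frac12$ is thus absorbed, and this block yields $\int_\Omega\overline{u(x)}\int_\Omega w(x,y)(v(x)-v(y))\,d\mu_\Lambda(y)\,d\mu_\Lambda(x)$.

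On the mixed block, the first summand gives $\int_\Omega\overline{u(x)}\int_{\delta_w\Omega}w(x,\xi)(v(x)-v(\xi))\,d\mu_\Lambda(\xi)\,d\mu_\Lambda(x)$. Adding this to the interior contribution produces $\int_\Omega\overline{u}\,\Delta_\Omega v\,d\mu_\Lambda$ by the formula for $\Delta_\Omega$ above. The second summand, after Fubini and using $w(\xi,z)=w(z,\xi)$, becomes
\[
\int_{\delta_w\Omega}\overline{u(\xi)}\int_\Omega w(\xi,z)(v(\xi)-v(z))\,d\mu_\Lambda(z)\,d\mu_\Lambda(\xi)=\int_{\delta_w\Omega}\overline{u(\xi)}\,N_{\delta_w\Omega}v(\xi)\,d\mu_\Lambda(\xi),
\]
which is the boundary term. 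The second line of the assertion is then just the rewriting as $L^2$-pairings on $\Omega$ and $\delta_w\Omega$ with the restricted measures.

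The main obstacle is the bookkeeping of the factor $\frac12$ on the mixed block. Read literally, the region $\Omega\times\closure_w\Omega$ contains the pairs in $\Omega\times\delta_w\Omega$ only once. The halving that works on $\Omega\times\Omega$ therefore does not come for free there, and one would obtain only half of each mixed term. I would resolve this the way \cite{Kasue2017} and the graph-theoretic Gauss--Green formula do. The energy is understood as the symmetric integral over all pairs $(x,y)\in\closure_w\Omega\times\closure_w\Omega$ with at least one entry in $\Omega$, i.e.\ over $(\Omega\times\closure_w\Omega)\cup(\closure_w\Omega\times\Omega)$. Equivalently, the mixed pairs are counted in both orders, which is the natural reading given that $\delta_w\Omega\times\delta_w\Omega$ pairs carry no $\Omega$-information. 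With this symmetric convention, the mixed block splits into its two orderings. Each ordering contributes, after the $x\leftrightarrow y$ renaming, exactly one of the two terms computed above, with coefficient $1$. I would state this convention explicitly at the start of the proof and verify it against the definition of $\mathcal{E}_{\Omega,\closure_w\Omega}$. The remaining steps are routine.
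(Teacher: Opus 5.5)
Your route is the paper's own: split $\mathcal{E}_{\Omega,\closure_w\Omega}$ into $\mathcal{E}_\Omega$ plus the block over $\Omega\times\delta_w\Omega$, then symmetrise as in Lemma \ref{DirichletLaplace}.

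\textbf{The factor $\frac12$.} This is not a matter of convention. With the definition as printed, the identity is false. The paper's one-line proof (``adapting the proof of Lemma \ref{DirichletLaplace} to both summands'') silently uses a symmetry that the region $\Omega\times\delta_w\Omega$ does not have. Set
\[
M(u,v)=\int_\Omega\int_{\delta_w\Omega}w(x,\xi)\left(\overline{u(x)}-\overline{u(\xi)}\right)(v(x)-v(\xi))\,d\mu_\Lambda(\xi)\,d\mu_\Lambda(x).
\]
The printed left-hand side equals $\mathcal{E}_\Omega(u,v)+\frac12M(u,v)$. Your own expansion shows the right-hand side equals $\mathcal{E}_\Omega(u,v)+M(u,v)$.

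For a concrete counterexample, take $\Omega$ compact open with $\delta_w\Omega\neq\emptyset$ and $u=v=1_\Omega$. Put $W=\int_\Omega\int_{\delta_w\Omega}w(x,\xi)\,d\mu_\Lambda(\xi)\,d\mu_\Lambda(x)\in(0,\infty)$. Then the left side is $\frac12W$ and the right side is $W$.

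So your plan to ``verify'' the symmetric convention against the definition of $\mathcal{E}_{\Omega,\closure_w\Omega}$ cannot succeed. Say plainly that the lemma as stated is wrong. Then prove it for the corrected energy, namely $\frac12$ times the integral over $(\closure_w\Omega\times\closure_w\Omega)\setminus(\delta_w\Omega\times\delta_w\Omega)$; this is exactly what the given $\Delta_\Omega$ and $N_{\delta_w\Omega}$ require. For that corrected statement your computation goes through. The two orderings of the mixed block are equal by symmetry, so together they give $M(u,v)$, which expands into the interior-mixed term and the boundary term, each with coefficient $1$.

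\textbf{Justification of Fubini.} This second gap is shared by the paper's Lemma \ref{DirichletLaplace}. Cauchy--Schwarz against the energy controls only the unexpanded integrand $w(x,y)\absolute{u(x)-u(y)}\,\absolute{v(x)-v(y)}$. It does not control the pieces $w(x,y)\overline{u(x)}(v(x)-v(y))$ that you integrate separately after expanding. The natural bound for those involves $\int\absolute{u}^2\deg\,d\mu_\Lambda$ with $\deg(x)=\int_{\closure_w\Omega}w(x,y)\,d\mu_\Lambda(y)$. For diagonal-singular kernels such as $d_\Lambda(x,y)^{-n\alpha}$ with $\alpha\ge1$, this degree is infinite at every point of $\Omega$. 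In that case $\Delta_\Omega v$ need not even be an absolutely convergent integral for $v$ in the form domain.

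There are two ways to fix this:
\begin{itemize}
\item Assume $\sup_x\deg(x)<\infty$. Then every piece is absolutely integrable for $u,v\in L^2$.
\item Prove the identity for the truncated kernels $w\,1_{\mathset{d_\Lambda>\varepsilon}}$, which are bounded because $w$ is locally constant off the diagonal and $X$ is compact. Then let $\varepsilon\to0$ for those $v$ whose truncated $\Delta_\Omega v$ and $N_{\delta_w\Omega}v$ converge in $L^2$. The left-hand side converges by dominated convergence.
\end{itemize}
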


\begin{proof}
By writing
\[
\mathcal{E}_{\Omega,\closure_w\Omega}(u,v)=\mathcal{E}_\Omega(u,v)+\frac12\int_\Omega\int_{\delta_w\Omega} w(x,y)\left(\overline{u(x)}-\overline{u(y)}\right)(v(x)-v(y))\,d\mu_\Lambda(y)\,d\mu_\Lambda(x)
\]
and then by adapting the proof of Lemma \ref{DirichletLaplace} to both summands, the asserted identity follows.
\end{proof}

\section{$p$-adic Neumann boundary value problems}

Generalising \cite{HS2022}, we can define a  of Neumann Boundary Value Problem in the context of $p$-adic analytic manifolds:

\begin{definition}
Let $\phi\in L^2(\delta_w\Omega,\mu_\Lambda)$. A function 
$u\in\dom(\mathcal{E}_{\closure_w\Omega})$ solves
the \emph{Neumann Boundary Value Problem} for $(\phi,\Omega)$, if
\begin{align}\label{NBVP}
\Delta_\Omega u(x) = 0\quad\text{and}\quad N_{\delta_w\Omega} u(y) = \phi(y)
\end{align}
for $x\in\Omega$ and $y\in\delta_w\Omega$. It is a \emph{weak solution} to the Neumann Boundary Value
Problem for $(\phi,\Omega)$, if
\begin{align}\label{NBVP_w}
\langle\Delta_\Omega u, v\rangle_{L^2} = 0\quad\text{and}\quad
\langle N_{\delta_w\Omega}u,v\rangle_{L^2} = \langle\phi,v\rangle_{L^2}
\end{align}
for all $v\in\dom(\mathcal{E}_{\closure_w\Omega})$ having support in $\Omega$ and $\delta_w\Omega$, respectively.
\end{definition}

\begin{Lemma} Two solutions (weak or not) of the Neumann Boundary Value Problem (\ref{NBVP})
for $(\phi,\Omega)$ differ by an element of $\ker(\mathcal{E}_\Omega)$. In particular, if $\Omega$ has the structure
of a connected graphon induced by 
$w_\Omega$, then they differ only by a constant.
\end{Lemma}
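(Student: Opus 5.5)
Let $u_1,u_2\in\dom(\mathcal{E}_{\closure_w\Omega})$ be two (weak) solutions of the Neumann Boundary Value Problem for $(\phi,\Omega)$, and put $h=u_1-u_2$. The plan is to show that $h$ solves the homogeneous problem and then to use the $p$-adic Gauss-Green identity (Lemma \ref{GaussGreen_p}) to show that its energy vanishes. By linearity of $\Delta_\Omega$ and $N_{\delta_w\Omega}$, subtracting the two sets of conditions in (\ref{NBVP_w}) gives
\[
\langle\Delta_\Omega h,v\rangle_{L^2}=0\quad\text{and}\quad\langle N_{\delta_w\Omega}h,v\rangle_{L^2}=0
\]
for all admissible $v$ supported in $\Omega$ and in $\delta_w\Omega$, respectively. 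For strong solutions these identities hold pointwise, so the strong case is covered by the same argument.

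Next I apply Lemma \ref{GaussGreen_p} with $u=v=h$:
\[
\mathcal{E}_{\Omega,\closure_w\Omega}(h,h)=\langle h,\Delta_\Omega h\rangle_{L^2(\Omega,\mu_\Lambda)}+\langle h,N_{\delta_w\Omega}h\rangle_{L^2(\delta_w\Omega,\mu_\Lambda)}.
\]
I split $h=h\mathds{1}_\Omega+h\mathds{1}_{\delta_w\Omega}$. The pieces are supported in $\Omega$ and in $\delta_w\Omega$, and they remain in the form domain because the indicators are locally constant and the sets are open, or at least measurable with $w$ locally constant off the diagonal. Testing the weak equations with these pieces makes both terms on the right vanish, so $\mathcal{E}_{\Omega,\closure_w\Omega}(h)=0$.

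The integrand $w(x,y)\,\lvert h(x)-h(y)\rvert^2$ is nonnegative, and its integral over $\Omega\times\closure_w\Omega$ is zero. Since $\Omega\times\Omega\subseteq\Omega\times\closure_w\Omega$ and $w_\Omega=w$ there, we get $\mathcal{E}_\Omega(h)=0$. Applying the Cauchy--Schwarz inequality to the nonnegative Hermitian form $\mathcal{E}_\Omega$ then gives $\mathcal{E}_\Omega(h,v)=0$ for all $v$, so $h\in\ker(\mathcal{E}_\Omega)$.

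If $w_\Omega$ makes $\Omega$ a connected graphon, then $w(x,y)(h(x)-h(y))=0$ almost everywhere on $\Omega\times\Omega$. Exactly as in part 1 of the preceding lemma, chaining along paths $x_0,\dots,x_n$ with $w(x_i,x_{i+1})\neq0$ shows that $h$ is constant, up to a null set, on $\Omega$. The vanishing of the energy also covers pairs $(x,y)$ with $y\in\delta_w\Omega$, so $h$ is locally forced to equal that constant on $\delta_w\Omega$ as well.

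The main obstacle is the step that makes the boundary terms vanish. The weak formulation only tests against functions supported in $\Omega$ or in $\delta_w\Omega$ separately, so I must justify that $h\mathds{1}_\Omega$ and $h\mathds{1}_{\delta_w\Omega}$ belong to $\dom(\mathcal{E}_{\closure_w\Omega})$. I would first try to bound their energies by that of $h$ plus $L^2$ terms, using that $w$ is locally constant off the diagonal and $X$ is compact. A secondary technical point is passing from ``almost everywhere'' to the pointwise chaining argument, which uses that $w$ is locally constant, so that the set where $w\neq0$ is open.
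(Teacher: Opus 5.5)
Your proposal is correct and follows the paper's route. You subtract the two solutions, apply the $p$-adic Gauss--Green identity (Lemma \ref{GaussGreen_p}) to the difference so that the energy vanishes, and finish with connectivity of the graphon. Your version is tighter than the paper's: the paper asserts $\mathcal{E}_{\Omega,\closure_w\Omega}(f,g)=0$ for every $f\in L^2(\closure_w\Omega)$ without checking that such $f$ are admissible test functions (the same issue you flag for $h\mathds{1}_\Omega$ and $h\mathds{1}_{\delta_w\Omega}$), and then appeals to Lemma \ref{DirichletLaplace}, whereas your positivity argument on $\Omega\times\Omega\subseteq\Omega\times\closure_w\Omega$ justifies the passage to $\ker(\mathcal{E}_\Omega)$ directly.
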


\begin{proof}
If $u,v \in\dom(\mathcal{E}_{\closure_w\Omega})$ are two solutions of (\ref{NBVP}) of (\ref{NBVP_w}) for $(\phi,\Omega)$, then $g = u - v$ is a solution of (\ref{NBVP}) or (\ref{NBVP_w}) for $(0,\Omega)$. Hence, by the $p$-Adic Gauss-Green
Identity (Lemma \ref{GaussGreen_p}), we have in both cases that
\[
\mathcal{E}_{\Omega,\closure_w\Omega}(f,g)=\langle f,\Delta_\Omega \,g\rangle_{L^2(\Omega)}+
\langle f,N_{\delta_w\Omega}\,g\rangle_{L^2(\delta_w\Omega)}=0+0=0
\]
for any $f\in L^2(\closure_w\Omega,\mu_\Lambda)$. Hence, $g\in\ker\Delta_w=\ker\mathcal{E}_\Omega$, because of Lemma \ref{DirichletLaplace}. In particular, if $(\Omega,w_\Omega)$ is a connected graphon, then $g$ is constant on $\Omega$. This proves the assertions.
\end{proof}

\begin{Proposition}
 Let $\phi\in L^2(\delta_w\Omega,\mu_\Lambda)$. If the Neumann boundary value problem for $(\phi,\Omega)$ has a
solution (weak or not), then
\[
\int_{\delta_w\Omega}\phi(x)\,d\mu_\Lambda(x) = 0
\]
holds true.
\end{Proposition}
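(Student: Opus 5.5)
The plan is to test the Gauss-Green identity (Lemma \ref{GaussGreen_p}) against the constant function. Take $u\equiv 1$ on $\closure_w\Omega$. Because $X$ is compact, $\mu_\Lambda$ is finite, so $u\in L^2(\closure_w\Omega,\mu_\Lambda)$. Every difference $\overline{u(x)}-\overline{u(y)}$ vanishes, so $\mathcal{E}_{\Omega,\closure_w\Omega}(1,v)=0$ for every $v$, and in particular for the solution $v$. Lemma \ref{GaussGreen_p} then gives
\[
0=\int_\Omega \Delta_\Omega v(x)\,d\mu_\Lambda(x)+\int_{\delta_w\Omega}N_{\delta_w\Omega}v(\xi)\,d\mu_\Lambda(\xi)\,.
\]

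For a strong solution, the first integral vanishes because $\Delta_\Omega v=0$ on $\Omega$. The second integrand equals $\phi$ on $\delta_w\Omega$. Together these give $\int_{\delta_w\Omega}\phi\,d\mu_\Lambda=0$.

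For a weak solution, I would split the constant test function as $1=1_\Omega+1_{\delta_w\Omega}$. Both pieces are indicator functions of disjoint sets: $1_\Omega$ has support in $\Omega$ and $1_{\delta_w\Omega}$ has support in $\delta_w\Omega$. Provided both lie in $\dom(\mathcal{E}_{\closure_w\Omega})$, the weak formulation (\ref{NBVP_w}) applies to each. It gives $\langle \Delta_\Omega v,1_\Omega\rangle=0$ and $\langle N_{\delta_w\Omega}v,1_{\delta_w\Omega}\rangle=\langle\phi,1_{\delta_w\Omega}\rangle$. Since these functions are real, taking complex conjugates of the inner products makes no difference. Substituting into the displayed identity yields $\int_{\delta_w\Omega}\phi\,d\mu_\Lambda=0$.

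The main obstacle is justifying that $1_\Omega$ and $1_{\delta_w\Omega}$ belong to $\dom(\mathcal{E}_{\closure_w\Omega})$. Their energy is
\[
\tfrac12\int_\Omega\int_{\delta_w\Omega}w\,d\mu_\Lambda\,d\mu_\Lambda ,
\]
and I would bound this using the finiteness of the kernel integrals already implicit in the definition of $\Delta_\Omega$ and $N_{\delta_w\Omega}$. If that bound is delicate, I would instead approximate by locally constant test functions in $\mathcal{D}(X)$ and pass to the limit in $L^2$. A second option avoids the Gauss-Green identity entirely. By the symmetry of $w$ and Fubini, the two double integrals $\int_\Omega\int_{\closure_w\Omega}w(x,y)(v(x)-v(y))$ and $\int_{\delta_w\Omega}\int_\Omega w(\xi,z)(v(\xi)-v(z))$ cancel. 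The $\Omega\times\Omega$ part of the first is antisymmetric and so vanishes, and the cross term is exactly the negative of the second.
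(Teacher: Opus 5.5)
Your proposal is essentially the paper's argument: put the constant function into the first slot of the Gauss--Green identity (Lemma~\ref{GaussGreen_p}), so that the left-hand side vanishes, the interior term drops out, and the boundary term becomes $\int_{\delta_w\Omega}\phi\,d\mu_\Lambda$. Two details differ from the paper: you kill the interior term with the equation on $\Omega$, whereas the paper invokes self-adjointness, and you state explicitly that $\mathcal{E}_{\Omega,\closure_w\Omega}(1,u)=0$, which the paper leaves implicit. The admissibility of $1_\Omega$ and $1_{\delta_w\Omega}$ as test functions, which you flag, is also assumed tacitly in the paper. It holds, for instance, when $\Omega$ is compact open: then $w$ is bounded on the compact set $\Omega\times(X\setminus\Omega)$, which avoids the diagonal, so the energy you wrote down is finite.
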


\begin{proof}
Let $u\in \dom(\mathcal{E}_{\closure_w\Omega})$ be a solution (weak or not) of the Neumann boundary value problem for $(\phi,\Omega)$.
Using the $p$-adic Gauss-Green Identity (Lemma \ref{GaussGreen_p}), it follows that
\begin{align*}
\mathcal{E}_{\Omega,\closure_w\Omega}(1_{\closure_w\Omega},u)
&=\langle 1_{\closure_w\Omega}, \Delta_\Omega u\rangle_{L^2(\Omega,\mu_\Lambda)}
+\langle 1_{\closure_w\Omega},N_{\delta_w\Omega} u\rangle_{L^2(\delta_w\Omega,\mu_\Lambda)}
\\
&=\langle 1_{\closure_w\Omega},N_{\delta_w\Omega} u\rangle_{L^2(\delta_w\Omega,\mu_\Lambda)}
\\
&=\langle 1_{\closure_w\Omega},\phi\rangle_{L^2(\delta_w\Omega,\mu_\Lambda)}
\\
&=\int_{\delta_w\Omega}\phi(x)\,d\mu_\Lambda(x)\,.
\end{align*}
Since, by self-adjointness,
\[
\langle 1_{\closure_w\Omega},\Delta_\Omega u\rangle_{L^2(\Omega,\mu_\Lambda)}
=\langle \Delta_\Omega 1_{\closure_w\Omega},u\rangle_{L^2(\Omega,\mu_\Lambda)}
=\langle 0,u\rangle_{L^2(\Omega,\mu_\Lambda)}=0\,,
\]
the assertion now follows.
\end{proof}

Define the following space:
\[
\mathcal{N} = 
\mathset{u\in\dom(\mathcal{E}_{\closure_w\Omega})\cap L^\infty(\closure_w\Omega) \colon\Delta_w u\in L^\infty(X)}\,.
\]
\begin{Lemma}
The space $\mathcal{N}$ is dense in $\dom(\mathcal{E}_{\closure_w\Omega})$.
\end{Lemma}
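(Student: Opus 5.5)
We prove density by approximating an arbitrary $u\in\dom(\mathcal{E}_{\closure_w\Omega})$ in the form norm $\norm{\cdot}_{\mathcal{E}}=\mathcal{E}_{\closure_w\Omega}(\cdot)+\norm{\cdot}^2_{L^2}$, in two stages: first truncation, to reach $L^\infty$, and then replacement by locally constant test functions, to get control of $\Delta_w$. The norm $\norm{\cdot}_{\mathcal{E}}$ is closed by the closedness statement in the Lemma preceding the Gauss-Green identity, adapted to the restricted kernel $w_\Omega$.

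\emph{Stage 1 (truncation).} For $N>0$, let $u_N$ be the pointwise truncation of $u$: for real-valued $u$ this is $u_N=\max(-N,\min(N,u))$, and for complex $u$ we apply it to real and imaginary parts separately. Then $u_N$ is a contraction of $u$, so the Markov property gives $\mathcal{E}_{\closure_w\Omega}(u_N)\le\mathcal{E}_{\closure_w\Omega}(u)$, and $u_N\in L^\infty(\closure_w\Omega)$. Clearly $u_N\to u$ in $L^2$ by dominated convergence. Moreover, $\mathcal{E}_{\closure_w\Omega}(u-u_N)$ is the double integral of $w(x,y)\absolute{(u-u_N)(x)-(u-u_N)(y)}^2$. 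The map $t\mapsto t-\max(-N,\min(N,t))$ is $1$-Lipschitz, so this integrand is bounded by $w(x,y)\absolute{u(x)-u(y)}^2$, which is integrable. The integrand also tends to $0$ pointwise, so $\mathcal{E}_{\closure_w\Omega}(u-u_N)\to0$ by dominated convergence. Hence $L^\infty\cap\dom(\mathcal{E}_{\closure_w\Omega})$ is dense.

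\emph{Stage 2 (regularisation).} Fix a bounded $u$ in the form domain. Using the finite atlas and the integral structure $\Lambda$, we approximate $u$ by the conditional expectations $u_k$ onto the finite partition of $\closure_w\Omega$ into balls of radius $\absolute{\pi}^k$, that is, by averaging over each ball. Each $u_k$ is locally constant and satisfies $\norm{u_k}_\infty\le\norm{u}_\infty$. By martingale convergence, $u_k\to u$ in $L^2$ and almost everywhere. For such a $u_k$, the quantity $\absolute{\Delta_w u_k(x)}\le 2\norm{u}_\infty\int_X w(x,y)\,d\mu_\Lambda(y)$ is bounded, provided the degree function $x\mapsto\int w(x,y)\,d\mu_\Lambda(y)$ is bounded. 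This holds for the kernels $g(d_\Lambda(x,y))$ under consideration, and we will assume it, as is implicit in $\Delta_w$ being defined on $\mathcal{D}(X)$. So $u_k\in\mathcal{N}$.

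It remains to show $\mathcal{E}_{\closure_w\Omega}(u-u_k)\to0$, and this is the main obstacle. Averaging is not a contraction in the sense of the Markov property. The plan is to use that $w$ is locally constant off the diagonal. Split the double integral into a near-diagonal region $d_\Lambda(x,y)\le\absolute{\pi}^m$ and its complement. On the complement, $w$ is bounded and the integrand converges in $L^1$, since $u-u_k\to0$ in $L^2$ and everything is bounded. On the near-diagonal region, for $k\ge m$, both $x$ and $y$ lie in a common ball of level $m$. By Jensen's inequality applied to the averaging, and the fact that $w$ is locally constant there (indeed, for $g(d_\Lambda)$, $w$ is invariant under the isometric ball structure), the near-diagonal energy of $u_k$ is bounded by a constant times that of $u$. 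The near-diagonal energy of $u$ tends to $0$ as $m\to\infty$ because the full energy is finite. Taking first $m$ large and then $k\to\infty$ yields the claim. Combining both stages gives density of $\mathcal{N}$.
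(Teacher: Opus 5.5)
Your Stage 1 (truncation) is fine. Your overall route differs from the paper's, which uses the resolvent: it writes $f=(\Delta_\Omega+1)v$, approximates $f$ in $L^2$ by bounded $f_n$, and takes $(\Delta_\Omega+1)^{-1}f_n\in\mathcal{N}$. That argument needs no structure of $w$ at all. It uses only three facts: the resolvent is Markovian, so it preserves $L^\infty$; $\Delta_\Omega(\Delta_\Omega+1)^{-1}f_n=f_n-(\Delta_\Omega+1)^{-1}f_n$; and $\norm{(\Delta_\Omega+1)^{-1}g}^2_{\mathcal{E}_{\closure_w\Omega}}=\langle g,(\Delta_\Omega+1)^{-1}g\rangle\le\norm{g}^2_{L^2}$. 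Strictly, one should start from $v$ in the operator domain, which is a form core, so that $f\in L^2$. Your constructive route would give more, namely locally constant approximants, but Stage 2 has a genuine gap.

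The gap is the Jensen comparison near the diagonal. For distinct level-$k$ cells $B',B''$, Jensen gives
\[
\absolute{u_k(x)-u_k(y)}^2\le\frac{1}{\mu_\Lambda(B')\mu_\Lambda(B'')}\iint_{B'\times B''}\absolute{u(s)-u(t)}^2\,d\mu_\Lambda(s)\,d\mu_\Lambda(t).
\]
To turn this into a bound by $\iint_{B'\times B''}w(s,t)\absolute{u(s)-u(t)}^2$, you need $\sup_{B'\times B''}w\le C\inf_{B'\times B''}w$ with $C$ independent of $k\ge m$. Local constancy of $w$ off the diagonal does not give this. The neighbourhood of constancy around $(x,y)$ may be far smaller than $d_\Lambda(x,y)$, so $w$ can vary, or even vanish, on parts of $B'\times B''$, and $u$ can oscillate precisely where $w$ is small. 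Then the near-diagonal energy of $u_k$ is not controlled by that of $u$. The step does hold for radial kernels $w=g(d_\Lambda)$: by ultrametricity near the diagonal, $d_\Lambda(x,y)=d_\Lambda(s,t)$ for $x,s\in B'$ and $y,t\in B''$, and then even $C=1$. So you have proved the lemma only under this extra block-constancy hypothesis, not for the general symmetric kernels, locally constant off the diagonal, that the paper allows. For those, use the resolvent argument.

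A secondary point concerns your bounded-degree assumption. It is not implicit in $\Delta_w$ being defined on $\mathcal{D}(X)$, since locally constant functions cancel the diagonal singularity. It also fails for the paper's kernels $d_\Lambda^{-n\alpha}$ with $\alpha\ge1$. You do not need it: $u_k$ is constant on the cell of $x$, and $w$ is bounded on the compact, off-diagonal set of pairs lying in different level-$k$ cells, so $\Delta_w u_k\in L^\infty$. This does, however, use cells of uniform size, i.e.\ that $\closure_w\Omega$ is a finite union of level-$k$ balls. You assumed this tacitly, and it need not hold for general $\Omega$.
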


\begin{proof} Adapting the proof of \cite[Lemma 3.7]{HS2022} to this more general setting is
as follows: Let $v\in\dom(\mathcal{E}_{\closure_w\Omega})$. Then
\[
f = (\Delta_\Omega + 1)v \in L^2(\closure_w\Omega,\mu_\Lambda)\,.
\]
Now, choose a sequence $(f_n)$ with $f_n\in L^\infty(\closure_w\Omega)$ such that
\[
\norm{f_n - f}_{L^2(\closure_w\Omega,\mu_\Lambda)}\to 0\,.
\]
Then we have $(\Delta_\Omega+1)^{-1}f_n\in\mathcal{N}$, and by continuity of the resolvent
operator that
\[
\norm{(\Delta_\Omega+1)^{-1}(f_n-f)}_{L^2(\closure_w\Omega,\mu_\Lambda)}\to0\,.
\]
It follows that
\begin{align*}
\norm{(\Delta_\Omega+1)^{-1}(f_n-f)}_{\mathcal{E}_{\closure_w\Omega}}^2
&=\mathcal{E}_{\closure_w\Omega}\left((\Delta_\Omega+1)^{-1}(f_n-f)\right)
\\
&+\norm{(\Delta_\Omega+1)^{-1}(f_n-f)}_{L^2(\closure_w\Omega,\mu_\Lambda)}^2
\end{align*}
tends to zero for $n\to\infty$. Therefore, $\mathcal{N}$ is dense in $\dom(\mathcal{E}_{\closure_w\Omega})$, as asserted.
\end{proof}

\begin{cor}\label{Existence_NBVP}
Let $\phi\in L^2(\delta_w\Omega)$ such that
\[
\int_{\delta_w\Omega}\phi(x)\,d\mu_\Lambda(x)= 0\,.
\]
Then $u\in L^\infty(\closure_w\Omega)$ is a weak solution of the Neumann Boundary Value Problem for $(\phi,\Omega)$, i.e.\ a solution of (\ref{NBVP_w}),  if
\[
\langle u,\Delta_\Omega v\rangle_{L^2(\closure_w\Omega,\mu_\Lambda)}=
\int_{\delta_w\Omega}\phi(x)v(x)\,d\mu_\Lambda(x)
\]
for every $v \in\mathcal{N}$.
\end{cor}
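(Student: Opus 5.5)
The plan is to turn the weak formulation (\ref{NBVP_w}) into a single integral identity with the $p$-adic Gauss-Green identity (Lemma \ref{GaussGreen_p}), and then use density of $\mathcal{N}$ to go from test functions in $\mathcal{N}$ to all of $\dom(\mathcal{E}_{\closure_w\Omega})$. Assume the stated identity holds for every $v\in\mathcal{N}$. Since $u\in L^\infty(\closure_w\Omega)\subset L^2(\closure_w\Omega,\mu_\Lambda)$ (the set $\closure_w\Omega\subseteq X$ has finite measure, $X$ being compact), the left-hand side is well defined for $v\in\mathcal{N}$.

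\textbf{Step 1.} Rewrite the left-hand side via the symmetric form. Applying Lemma \ref{GaussGreen_p} with the roles of $u$ and $v$ interchanged, and using that $\mathcal{E}_{\Omega,\closure_w\Omega}$ is Hermitian (the kernel $w$ is symmetric and real), gives
\[
\langle u,\Delta_\Omega v\rangle+\langle u,N_{\delta_w\Omega}v\rangle_{L^2(\delta_w\Omega)}=\mathcal{E}_{\Omega,\closure_w\Omega}(u,v)=\langle \Delta_\Omega u,v\rangle+\langle N_{\delta_w\Omega}u,v\rangle_{L^2(\delta_w\Omega)}
\]
for $v\in\mathcal{N}$. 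I read the pairing $\langle u,\Delta_\Omega v\rangle_{L^2(\closure_w\Omega)}$ in the corollary as this full Gauss-Green pairing, i.e.\ as $\mathcal{E}_{\Omega,\closure_w\Omega}(u,v)$, with $\Delta_\Omega$ and $N_{\delta_w\Omega}$ acting on the two parts of $\closure_w\Omega$. The hypothesis then becomes
\[
\langle\Delta_\Omega u,v\rangle_{L^2(\Omega)}+\langle N_{\delta_w\Omega}u,v\rangle_{L^2(\delta_w\Omega)}=\langle\phi,v\rangle_{L^2(\delta_w\Omega)}\qquad (v\in\mathcal{N}).
\]

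\textbf{Step 2.} Extend this identity to all $v\in\dom(\mathcal{E}_{\closure_w\Omega})$ using the density lemma for $\mathcal{N}$. Both sides are continuous in $v$ for the energy norm. The left-hand side equals $\mathcal{E}_{\Omega,\closure_w\Omega}(u,v)$, which is bounded by Cauchy--Schwarz for the nonnegative form, given $u$ of finite energy. The right-hand side is bounded by $\norm{\phi}_{L^2}\norm{v}_{L^2}$.

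\textbf{Step 3.} Separate the two conditions of (\ref{NBVP_w}). Taking $v$ supported in $\Omega$ kills both boundary terms and yields $\langle\Delta_\Omega u,v\rangle=0$. Taking $v$ supported in $\delta_w\Omega$ kills the interior term and yields $\langle N_{\delta_w\Omega}u,v\rangle=\langle\phi,v\rangle$. This is exactly (\ref{NBVP_w}). The zero-mean hypothesis on $\phi$ is what makes the identity consistent for $v=1_{\closure_w\Omega}$: the energy vanishes there, so the right-hand side must vanish too, as in the preceding Proposition. Note that $1_{\closure_w\Omega}\in\mathcal{N}$, so without zero mean the hypothesis could not hold at all.

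\textbf{Main obstacle.} The bookkeeping around the pairing on $\closure_w\Omega$ is where I expect trouble: which operator acts on $\Omega$ versus $\delta_w\Omega$, and whether $u$ genuinely lies in $\dom(\mathcal{E}_{\closure_w\Omega})$. The fact that $u\in L^\infty$ together with $w$ locally constant off the diagonal should give finite energy, but this may need the integrability of $w$ near the diagonal, which I would simply assume as part of the standing setup.
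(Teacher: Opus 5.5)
Your Steps 2 and 3 match the end of the paper's proof: extend the identity from $\mathcal{N}$ to all of $\dom(\mathcal{E}_{\closure_w\Omega})$ by density, then test with $v$ supported in $\Omega$ or in $\delta_w\Omega$. The gap is that every step you take presupposes $u\in\dom(\mathcal{E}_{\closure_w\Omega})$, and you never prove it. Lemma~\ref{GaussGreen_p} and the swap $\langle u,\Delta_\Omega v\rangle=\langle \Delta_\Omega u,v\rangle$ in Step 1 are only available for $u$ in the form domain. The energy-norm continuity of $v\mapsto\mathcal{E}_{\closure_w\Omega}(u,v)$ in Step 2 needs $\mathcal{E}_{\closure_w\Omega}(u)<\infty$. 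The hypothesis gives only $u\in L^\infty(\closure_w\Omega)$, so finite energy of $u$ is precisely the non-trivial content of the corollary.

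Your suggested way out does not work in the paper's setting, since $w$ is only assumed locally constant off the diagonal. For the kernels $w=d_\Lambda^{-n\alpha}$ with $\alpha>1$ used later, $\int w(x,y)\,d\mu_\Lambda(y)=\infty$, and the form domain is a Sobolev-type space that does not contain $L^\infty$. Assuming $\sup_x\int w(x,y)\,d\mu_\Lambda(y)<\infty$ instead would make $\Delta_\Omega$ bounded with form domain all of $L^2$. That would trivialise the statement, so it is not a harmless standing assumption.

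The missing idea is to apply the hypothesis to a test function built from $u$ itself. For $\alpha>0$ let $v_\alpha=\alpha(\Delta_\Omega+\alpha)^{-1}u$. Since $\mathcal{E}_{\closure_w\Omega}$ is Markovian, $\alpha(\Delta_\Omega+\alpha)^{-1}$ is a contraction on $L^\infty$ and on $L^2$. Hence $v_\alpha$ lies in the generator's domain, is bounded, and satisfies $\Delta_\Omega v_\alpha=\alpha u-\alpha v_\alpha\in L^\infty$, so $v_\alpha\in\mathcal{N}$. The hypothesis then gives
\[
\alpha\left\langle u,u-\alpha(\Delta_\Omega+\alpha)^{-1}u\right\rangle_{L^2(\closure_w\Omega,\mu_\Lambda)}=\int_{\delta_w\Omega}\phi\,v_\alpha\,d\mu_\Lambda\,,
\]
whose right-hand side is bounded by $\norm{\phi}_{L^2}\norm{u}_{L^2}$ uniformly in $\alpha$. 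The left-hand side is the approximating form $\mathcal{E}^{(\alpha)}(u,u)$. It increases to $\mathcal{E}_{\closure_w\Omega}(u)$ as $\alpha\to\infty$ and has a finite limit exactly when $u\in\dom(\mathcal{E}_{\closure_w\Omega})$ \cite[Lemma 1.3.4]{Fukushima1980}. Only after this may you identify $\langle u,\Delta_\Omega v\rangle$ with $\mathcal{E}_{\closure_w\Omega}(u,v)$ for $v\in\mathcal{N}$ and run your Steps 2 and 3. This is exactly the order of the paper's argument.
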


\begin{proof} 
We just follow the proof of \cite[Lemma 3.8]{HS2022}: First, $u\in\dom(\mathcal{E}_{\closure_w\Omega})$.
Namely, for every $\alpha>0$, we have that
\[
\Delta_\Omega\alpha(\Delta_\Omega + \alpha)^{-1} u 
=\alpha u - \alpha^2(\Delta_\Omega + \alpha)^{-1} u\in L^\infty(\closure_w\Omega)\,,
\]
which shows that $\alpha(\Delta_\Omega+\alpha)^{-1}u \in\mathcal{N}$. Hence, from the hypothesis on $u$, we
have
\begin{align*}
\int_{\delta_W\Omega}\phi\alpha(\Delta_\Omega+\alpha)^{-1}u\,d\mu_\Lambda
&=\langle u,\Delta_\Omega\alpha(\Delta_\Omega+\alpha)^{-1}u\rangle_{L^2}
\\
&=\alpha\langle u,u-\alpha(\Delta_\Omega+\alpha)^{-1}u\rangle_{L^2}
\end{align*}
for $\alpha>0$. Since $\mathcal{E}_{\closure_w\Omega}$ being Markov is equivalent with $\alpha(\Delta_\Omega+\alpha)^{-1}$ being
a contraction, cf.\ \cite[Theorem 1.4.1]{Fukushima1980}, this means that
\[
\int_{\delta_w\Omega}\phi\alpha(\Delta_\Omega+\alpha)^{-1}u\,d\mu_\Lambda
\le\mu_\Lambda(\delta_w\Omega)\norm{\phi}_\infty\norm{u}_\infty\,,
\]
and  thus
\[
\lim\limits_{\alpha\to\infty}\alpha
\langle u,u-\alpha(\Delta_\Omega+\alpha)^{-1}u\rangle_{L^2}<\infty\,.
\]
From the equality
\[
\dom(\mathcal{E}) =
\mathset{u\in L^2(\closure_w\Omega\mid \lim\limits_\alpha\langle u, u\rangle_{\mathcal{E}} < \infty}
\]
in \cite[Lemma 1.3.4]{Fukushima1980}, it now follows that $u\in\dom(\mathcal{E}_{\closure_w\Omega})$. By the assumption,
one has
\begin{align}\label{Dirichlet_E_int}
\mathcal{E}_{\closure_w\Omega}(u,v)&=
\langle u,\Delta_\Omega v\rangle_{L^2(\closure_w\Omega,\mu_\Lambda)}=\int_{\delta_w\Omega}\phi(x)v(x)\,d\mu_\Lambda(x)
\end{align}
for $v\in\mathcal{N}$. For general $v\in\dom(\mathcal{E}_{\closure_w\Omega})$ let $(v_n)_n\in\mathcal{N}$ such that
\[
\lim\limits_n\norm{v_n -v}_{\mathcal{E}_{\closure_w\Omega}} = 0\,.
\]
Then
\[
\lim\limits_{n}\mathcal{E}_{\closure_w\Omega}(u,v_n) 
=\mathcal{E}_{\closure_w\Omega}(u, v)\,,
\]
and since $\phi\in L^\infty(\closure_w\Omega,\mu_\Lambda)$, also
\[
\lim\limits_n\int_{\delta_w\Omega} \absolute{\phi v_n-\phi v}\,d\mu_\Lambda=0\,,
\]
which shows that (\ref{Dirichlet_E_int}) holds for all $v\in\dom(\mathcal{E}_{\closure_w\Omega})$. This now means that if $\supp(v)\subseteq\Omega$, then
\[
\langle \Delta_\Omega u,v\rangle_{L^2}=\langle u,\Delta_\Omega v\rangle_{L^2} = 0\,,
\]
and if $\supp(v)\subseteq\delta_w\Omega$, then
\[
\langle N_{\delta_w\Omega} u,v\rangle_{L^2}=\langle u,\Delta_\Omega v\rangle_{L^2} = \langle\phi,v\rangle_{L^2},
\]
i.e.\ $u$ is a weak solution of the Neumann Boundary Value Problem.
\end{proof}

\begin{remark} Corollary \ref{Existence_NBVP} thus gives a suffcient criterion for the existence
of a weak solution of the Neumann Boundary Value Problem (\ref{NBVP}) which is nothing but a Laplace equation under a von
Neumann boundary condition.
\end{remark}

\section{Green function}

\begin{definition}\label{ultracontractive}
Let $E$ be a locally compact separable metric space, and $m$ a
non-negative Radon measure on $E$ with full support. A semigroup $e^{-tL}$ is \emph{ultracontractive}, if there exists a decreasing function 
$\gamma\colon \mathds{R}_{>0}\to\mathds{R}_{>0}$ such that for
every $t > 0$ and $u\in L^2(E,m)$ we have
\[
\norm{e^{-tL} u}_{L^\infty(E,m)}
\le\gamma(t)\norm{u}_{L^2(E,m)}
\]
where the function $\gamma$ is called the \emph{rate function} of the semigroup.
\end{definition}

The overview in \cite[Appendix B]{HS2022} states the relevant relationships between Dirichlet forms, semigroups and Markov processes. A brief summary
is given here:
\newline

A Markovian Dirichlet form has an associated strongly continuous semi-
group with the Markovian property, cf. \cite[Theorem 1.4.1]{Fukushima1980}. In our case here,
the Dirichlet form $\mathcal{E}_{\closure_w\Omega}$ is closed and regular, i.e.\ $C(X)\cap\dom(\mathcal{E}_{\closure_w\Omega})$ is
uniformly dense in $C(X)$, as well as
$\norm{\cdot}_{\mathcal{E}_{\closure_w}}$-dense in 
$\dom(\mathcal{E}_{\closure_w\Omega} )$. 
Consequently, there exists a Hunt process $Y = (Y_t)_{t\ge0}$ on $\closure_w\Omega$, unique in a suitable
sense, such that for all Borel sets 
$A\subseteq\closure_w\Omega$ and for 
$\mu_\Lambda$-a.e.\ $x\in A$ we have
\[
e^{-t\Delta_\Omega}1_A(x)=\mathds{P}_x(Y_t\in A)\,.
\]
This is a Markov process $Y = (Y_t)_{t\ge0}$ associated with Dirichlet form $\mathcal{E}_{\closure_w\Omega}$,
and it holds true that
\[
e^{-t\Delta_\Omega}f(x)=\mathds{E}_xf(Y)=\int_{\closure_w\Omega}f(y)p_t(x,d\mu_\Lambda(y))
\]
for $\mu_\Lambda$-a.e.\ $x\in\closure_w\Omega$, $f\in L^2(\closure_w\Omega,\mu_\Lambda)$, and a heat kernel distribution
$p_t(x,\dot)$ valued in $[0,\infty)$ on the Borel sets of $\closure_w\Omega$. Under the condition of 
ultracontractivity, cf.\ Definition \ref{ultracontractive}, 
it turns out that $p_t(x,\cdot)$ also has a density function
\[
p_t(\cdot,\cdot)\colon\closure_w\Omega\times\closure_w\Omega\to[0,\infty)
\]
(named heat kernel) with the important properties
\begin{align}\label{hk_symmetric}
p_t(x,y)&=p_t(y,x)
\\
p_{t+s}(x,y)&=\int_{\closure_w\Omega}p_t(x,z)p_s(z,y)\,d\mu_\Lambda(z)
\end{align}
for $s,t > 0$ and $\mu_\Lambda$-a.e.\ $x,y \in\closure_w\Omega$.

\begin{definition}
The process $L = (L_t)_{t\ge0}$ with
\[
L_t =\int_0^1 1_{\mathset{Y_s\in\delta_w\Omega}}\, ds\,,
\]
where $t\ge 0$, is called the \emph{local time} of the process $Y$ on the boundary $\delta_w\Omega$.
\end{definition}

Notice that the local time $L$ satisfies
\begin{align}\label{localTimeDer}
(L_s)'=\frac{d}{dt}|_{t=s}L_t=1_{\mathset{Y_s\in\delta_w\Omega}}
\end{align}
almost everywhere on the half-line $s\ge 0$.

\begin{Lemma}\label{solutionFromheatKernel} 
Assume that the heat kernel $p_t(x,y)$ associated with the semigroup $e^{-t\Delta_\Omega}$ exists. Let $\phi\in L^\infty(\delta_w\Omega)$. For every $t\ge 0$, the function
\[
u_t(x) = \mathds{E}_x\int_0^t
\phi(Y_s)\, dL_s
\]
is bounded, and
\[
u_t(x) = \int_{\delta_w\Omega}\int_0^t
\phi(y)p_s(x,)\,ds\, d\mu_\Lambda(y)
\]
for $x\in\closure_w\Omega$ holds true.
\end{Lemma}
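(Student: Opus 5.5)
The plan is to use \eqref{localTimeDer} to turn the Stieltjes integral against the local time into an ordinary Lebesgue integral in $s$, and then to compute the resulting expectation with the heat kernel.

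\emph{Step 1 (rewrite via local time).} By \eqref{localTimeDer}, $dL_s = 1_{\{Y_s\in\delta_w\Omega\}}\,ds$ almost everywhere on the half-line, pathwise. Since $\phi$ is defined only on $\delta_w\Omega$, we extend it by $0$ to all of $\closure_w\Omega$ and write $\tilde\phi=\phi\,1_{\delta_w\Omega}$. Then, almost surely,
\[
\int_0^t\phi(Y_s)\,dL_s=\int_0^t\tilde\phi(Y_s)\,ds .
\]

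\emph{Step 2 (boundedness).} The right-hand side of Step 1 is bounded in absolute value by $t\,\norm{\phi}_{L^\infty(\delta_w\Omega)}$. Taking $\mathds{E}_x$ gives $\absolute{u_t(x)}\le t\norm{\phi}_\infty$ for every $x$. Hence $u_t$ is bounded, and the expectation is well defined because the integrand is bounded and measurable in $(s,\omega)$.

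\emph{Step 3 (Fubini and the heat kernel).} Because the integrand is bounded and $[0,t]$ has finite length, Fubini's theorem lets us interchange $\mathds{E}_x$ and $\int_0^t ds$:
\[
u_t(x)=\int_0^t \mathds{E}_x\tilde\phi(Y_s)\,ds=\int_0^t e^{-s\Delta_\Omega}\tilde\phi(x)\,ds .
\]
Here we used $e^{-s\Delta_\Omega}f(x)=\mathds{E}_xf(Y_s)$. This identity applies since $\closure_w\Omega$ is compact, so $\tilde\phi\in L^\infty\subset L^2$. Under the assumed existence of the heat kernel,
\[
e^{-s\Delta_\Omega}\tilde\phi(x)=\int_{\closure_w\Omega}\tilde\phi(y)p_s(x,y)\,d\mu_\Lambda(y)=\int_{\delta_w\Omega}\phi(y)p_s(x,y)\,d\mu_\Lambda(y).
\]
The integrand $\phi(y)p_s(x,y)$ is measurable and, by nonnegativity of $p_s$ together with $\int p_s(x,y)\,d\mu_\Lambda(y)\le1$ (Markov property), it is integrable on $[0,t]\times\delta_w\Omega$. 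A second application of Fubini–Tonelli therefore swaps the $ds$ and $d\mu_\Lambda(y)$ integrals, which yields the asserted formula (reading the garbled $p_s(x,)$ as $p_s(x,y)$).

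\emph{Main obstacle.} The delicate point is that the semigroup–process identity $\mathds{E}_xf(Y_s)=e^{-s\Delta_\Omega}f(x)$ holds only for $\mu_\Lambda$-a.e.\ $x$, and this exceptional set may depend on $s$. To upgrade to a single null set, I would argue jointly measurably in $(s,x)$ and apply Fubini on $[0,t]\times\closure_w\Omega$, which gives the formula for a.e.\ $x$. If the statement is wanted for every $x$, I would then invoke the Hunt process property, namely that the transition function is given by the kernel $p_s(x,dy)$ for all $x$ outside a properly exceptional set. Alternatively, one can define the right-hand side pointwise via the heat kernel density and identify the two expressions for a.e.\ $x$.
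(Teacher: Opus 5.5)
Your proof is correct and is essentially the paper's argument: both rewrite $dL_s$ as $1_{\{Y_s\in\delta_w\Omega\}}\,ds$ via the local-time identity, bound $\absolute{u_t}$ by $t\norm{\phi}_\infty$, and then combine Fubini--Tonelli with the heat kernel. The one difference is that the paper writes $\phi(Y_s)1_{\{Y_s\in\delta_w\Omega\}}=\int_{\delta_w\Omega}\phi(y)\delta_y(Y_s)\,d\mu_\Lambda(y)$ and uses the formal rule $\mathds{E}_x\delta_y(Y_s)=p_s(x,y)$, whereas you apply $\mathds{E}_x\tilde\phi(Y_s)=\int\tilde\phi(y)p_s(x,y)\,d\mu_\Lambda(y)$ directly, which is the rigorous form of that step. (Minor point: $L^\infty\subset L^2$ holds because $\mu_\Lambda(X)<\infty$, not because $\closure_w\Omega$ is compact, which is not guaranteed.)
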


\begin{proof}
Inspired by the proof of \cite[Lemma 3.10]{HS2022}, we see that
\begin{align*}
u_t(x)&=\mathds{E}_x\int_0^t\phi(Y_s)(L_s)'\,ds=\mathds{E}_x\int_0^t\phi(Y_s)1_{Y_s\in\delta_w\Omega}\,ds
\\
&=\int_{\delta_w\Omega}\mathds{E}_x\phi(y)\delta_y(Y_s)\,ds\,d\mu_\Lambda(y)
\\
&=\int_{\delta_w\Omega}\phi(y)\mathds{E}_x\int_0^t\delta_y(Y_s)\,ds\,d\mu_\Lambda(y)
\\
&\stackrel{(*)}{=}\int_{\delta_w\Omega}\phi(y)\int_0^t\mathds{E}_x\delta_y(Y_s)\,ds\,d\mu_\Lambda(y)
\\
&=\int_{\delta_w\Omega}\phi(y)\int_0^t\delta_y(z)p_s(x,d\mu_\Lambda(z))\,ds\,d\mu_\Lambda(y)
\\
&=\int_{\delta_w\Omega}\phi(y)\int_0^t p_s(x,y)\,ds\,d\mu_\Lambda(y)
\\
&=\int_{\delta_w\Omega}\int_0^t\phi(y)p_s(x,y)\,ds\,d\mu_\Lambda(y)
\end{align*}
for $x \in\closure_w\Omega$, which admits the bound
\[
\absolute{u_t}\le t\norm{\phi}_\infty
\]
for $t\ge0$. The first equality holds true because $L_s$ is almost everywhere
differentiable on the half-line $s\ge 0$, cf.\ (\ref{localTimeDer}). Equality $(*)$ follows from
Fubini-Tonelli.
\end{proof}

\begin{remark}\label{ultracontractivityProperties}
From \cite[Appendix B]{HS2022}, we take the following information:
Let $E$ be a locally compact separable metric space, and $m$ a non-negative
Radon measure on $E$ with full support.
\begin{enumerate}
\item If $(e^{-tL})_{t>0}$ is ultracontractive with rate function $\gamma$ and $\mathset{p_t(\cdot,\cdot)}_{t>0}$ is a heat
kernel for $(e^{-tL})_{t>0}$, then we have
\[
p_{2t}(x,y)\le\gamma(t)^2
\]
for every $t > 0$ and $m$-a.e.\ $x,y\in E$, cf.\ \cite[Lemma 2.1.2.]{Davies1989}.

\item If $m(E) < \infty$ and $(e^{-tL})_{t>0}$ is ultracontractive, then the generator $L$ has
purely discrete spectrum, see for instance \cite[Theorem 2.1.4.]{Davies1989}. In this case the
heat kernel shows a typical mixing property: For every $t_0 > 0$ there are constants
$c_1,c_2 > 0$ such that the inequality
\begin{align}\label{ultracontractive_ineq}
\absolute{p_t(x,y) -\frac{1}{m(E)}
}
\le c_1 e^{-c_2 t}
\end{align}
holds for every $t > t_0$ and $m$-a.e.\ $x,y\in E$, cf.\ \cite[Eq. (18)]{HS2022}.
\end{enumerate}
\end{remark}

\begin{example}\label{geodistUltracont}
The kernel function
\[
w(x,y) = d_\Lambda(x,y)^{-\alpha}
\]
with $\alpha > 0$ yields an example of an ultracontractive semigroup on a compact $p$-adic analytic $n$-manifold $X$ with integral structure $\Lambda$ determined by the geodetic
distance $d_\Lambda$. In this case, the rate function is the constant function $\gamma(t) = 1$.
\end{example}

If a semigroup on $\closure_w\Omega$ has a heat kernel $p_t(x,y)$, then its associated
\emph{Green function} 
can be defined as
\begin{align}\label{GreenFunctionOmega}
G_\Omega(x,y)=\int_0^\infty\left(
p_t(x,y)-\sum\limits_{\psi\colon\lambda_\psi=0}
\psi(x)\overline{\psi(y)}
\right)\,,
\end{align}
where $\psi$ in the sum runs through an orthonormal basis of the null space of
$L$, and with $x,y \in\closure_w\Omega$. In the case of ultracontractivity and connectedness
of $\closure_w\Omega$, the existence of the Green function follows from (\ref{ultracontractive_ineq}).
\newline

The  heat
equation satisfied by the heat kernel $p_t(x,y)$ is the following one:
\begin{align}\label{heatEquation}
\frac{d}{ds}p_s(\cdot, y)|_{s=t} = -\Delta_\Omega p_t (\cdot, y)
\end{align}
for $y \in\closure_w\Omega$ $t \ge 0$.

\begin{thm}\label{SolutionOfNBVP_p}
 Let $X$ be a compact $p$-adic analytic $n$-manifold, and $\Delta_w$ an
integral operator with kernel function $w(x,y)$, which is the generator of an ultracontractive semigroup on a connected subdomain $\closure_w\Omega X$ with $\Omega\subseteq X$
open. 
Let $\phi\in L^\infty(\delta_w\Omega)$ such that
\[
\int_{\delta_w\Omega}
\phi\, d\mu_\Lambda = 0\,.
\]
Then the following statements hold true:
\begin{enumerate}
\item There is a unique function $u \in L^\infty(\closure_w\Omega)$ that weakly solves the Neumann Boundary Value Problem for $(\phi,\Omega)$, and satisfies
\[
\int_{\closure_w\Omega}u(x)\,d\mu_\Lambda(x) = 0 .
\]
\item This function satisfies
\begin{align*}
u(x)&=\lim\limits_{t\to\infty}\mathds{E}_x\int_0^t(Y_s)\,dL_s
\\
&=\int_0^\infty\int_{\delta_w\Omega}\phi(y)p_s(x,y)\,d\mu_\Lambda(y)\,ds
\\
&=\int_{\delta_w\Omega}\phi(y)G_\Omega(x,y)\,d\mu_\Lambda(y)
\end{align*}
for all $x\in\closure_w\Omega$.
\end{enumerate}
\end{thm}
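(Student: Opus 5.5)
The plan follows the proof of \cite[Theorem 3.11]{HS2022}, with the discrete graph replaced by $\closure_w\Omega$ and counting measure replaced by $\mu_\Lambda$. First I would build the candidate solution. For $t\ge0$ let $u_t$ be the function from Lemma \ref{solutionFromheatKernel}, so that
\[
u_t(x)=\int_0^t\int_{\delta_w\Omega}\phi(y)p_s(x,y)\,d\mu_\Lambda(y)\,ds .
\]
Because $\int_{\delta_w\Omega}\phi\,d\mu_\Lambda=0$, the constant $1/\mu_\Lambda(\closure_w\Omega)$ can be subtracted from $p_s$ inside the inner integral at no cost. The mixing inequality (\ref{ultracontractive_ineq}) of Remark \ref{ultracontractivityProperties} then bounds the inner integrand by $\norm{\phi}_\infty\mu_\Lambda(\delta_w\Omega)c_1e^{-c_2s}$ for $s>t_0$. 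For $s\le t_0$ the trivial bound from Lemma \ref{solutionFromheatKernel} suffices. Hence $u_t\to u$ uniformly in $x$, the limit $u$ lies in $L^\infty(\closure_w\Omega)$, and the first two equalities in item 2 hold by definition.

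Next I would obtain the Green function expression. Connectedness of $\closure_w\Omega$ together with the kernel lemma shows that the null space of $\Delta_\Omega$ consists of the constants. An orthonormal basis of it is therefore $\psi=\mu_\Lambda(\closure_w\Omega)^{-1/2}$, so $\sum\psi(x)\overline{\psi(y)}=1/\mu_\Lambda(\closure_w\Omega)$. By zero mean of $\phi$, Fubini, and the exponential bound above,
\[
\int_0^\infty\int_{\delta_w\Omega}\phi(y)p_s(x,y)\,d\mu_\Lambda(y)\,ds=\int_{\delta_w\Omega}\phi(y)G_\Omega(x,y)\,d\mu_\Lambda(y).
\]
Zero mean of $u$ follows from Fubini and the symmetry (\ref{hk_symmetric}). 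The semigroup is Markovian and conservative on the finite-measure space, so $\int_{\closure_w\Omega}p_s(x,y)\,d\mu_\Lambda(x)=1$. Consequently $\int u_t\,d\mu_\Lambda=t\int_{\delta_w\Omega}\phi\,d\mu_\Lambda=0$, and this passes to the limit.

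The main obstacle is showing that $u$ is a weak solution, and for this I would verify the criterion of Corollary \ref{Existence_NBVP}. Take $v\in\mathcal{N}$. By symmetry of $p_s$ and the heat equation (\ref{heatEquation}),
\[
\langle u_t,\Delta_\Omega v\rangle=\int_{\delta_w\Omega}\phi(y)\int_0^t\bigl(\Delta_\Omega e^{-s\Delta_\Omega}v\bigr)(y)\,ds\,d\mu_\Lambda(y)=\int_{\delta_w\Omega}\phi(y)\bigl(v(y)-e^{-t\Delta_\Omega}v(y)\bigr)\,d\mu_\Lambda(y).
\]
Moving $\Delta_\Omega$ onto the heat kernel is legitimate because $v,\Delta_\Omega v\in L^\infty$ and $p_s$ is bounded by Remark \ref{ultracontractivityProperties}. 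As $t\to\infty$, the mixing inequality gives $e^{-t\Delta_\Omega}v\to\mu_\Lambda(\closure_w\Omega)^{-1}\int v\,d\mu_\Lambda$ uniformly. This constant integrates to zero against $\phi$. Since $u_t\to u$ uniformly, the left side converges to $\langle u,\Delta_\Omega v\rangle$, which yields $\langle u,\Delta_\Omega v\rangle=\int_{\delta_w\Omega}\phi v\,d\mu_\Lambda$ for all $v\in\mathcal{N}$. Corollary \ref{Existence_NBVP} then shows that $u$ is a weak solution. The delicate points are the exchange of $\Delta_\Omega$ with the $s$-integral and the behaviour for small $s$. I would handle both with the $L^\infty$ bounds on $p_s$ for $s\ge\varepsilon$, letting $\varepsilon\to0$ and using strong continuity of the semigroup.

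Uniqueness comes from the earlier lemma on differences of solutions. Two weak solutions differ by an element of $\ker\mathcal{E}_\Omega$, which is the constants because the graphon is connected. The normalisation $\int u\,d\mu_\Lambda=0$ forces that constant to vanish.
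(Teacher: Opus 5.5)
Your proposal is correct and follows essentially the same route as the paper: build $u$ as the uniform limit of $u_t$ by centering $\phi$ and using the mixing inequality (\ref{ultracontractive_ineq}), read off the Green-function formula, obtain the zero mean from conservativeness and symmetry of $p_s$, and verify the criterion of Corollary \ref{Existence_NBVP} via the heat equation and the $t\to\infty$ limit. In a few places your version is tidier than the paper's. You write $v-e^{-t\Delta_\Omega}v$ where the paper uses the formal $p_0(x,y)$, you integrate $u_t$ before passing to the limit, and you derive uniqueness explicitly from the lemma on differences of solutions, which the paper's proof leaves implicit.
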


\begin{proof}
We just follow the lines of the proof of \cite[Theorem 3.12]{HS2022}.

\smallskip
We first show that $u(x) := \lim\limits_{t\to\infty} u_t (x)$ exists. By Lemma \ref{solutionFromheatKernel}, and since
$\phi$ is centered,
\begin{align*}
u_t(x)&=\int_{\delta_w\Omega}\int_0^t\phi(y)p_s(x,y)\,ds\,d\mu_\Lambda(y)
\\
&=\int_{\delta_w\Omega}\int_0^t\phi(y)\left(p_s(x,y)-\frac{1}{\mu_\Lambda(\closure_w\Omega)}\right)\,ds\,d\mu_\Lambda(y)\,,
\end{align*}
and given $0 < s < t$, we can see that
\begin{align*}
\absolute{u_t(x)-u_s(x)}&\le\int_{\delta_w\Omega}\int_s^t\absolute{\phi(y)}\absolute{p_r(x,y)-\frac{1}{\mu_\Lambda(\closure_w\Omega)}}\,dr\,d\mu_\Lambda(y)
\\
&\le c_1\int_{\delta_w\Omega}\absolute{\phi(y)}\,d\mu_\Lambda(y)\int_s^te^{-c_2r}\,dr\,,
\end{align*}
using (\ref{ultracontractive_ineq}).
This expression becomes arbitrarily small for $s, t$ are large. Hence,
the limit function $u$ exists, is bounded, and satisfies
\[
u(t)=\int_0^\infty\int_{\delta_w\Omega}\phi(y)p_s(x,y)\,d\mu_\Lambda(y)\,ds
\]
for $x\in\closure_w\Omega$. By the expression (\ref{GreenFunctionOmega}) for the Green function and since $\phi$ is
centred, it follows that
\[
u(x)=\int_{\delta_w\Omega}\phi(y)G_\Omega(x,y)\,d\mu_\Lambda(y)
\]
or every $x\in\closure_w\Omega$. Integrating this yields
\[
\int_{\closure_w\Omega}u(x)\,d\mu_\Lambda(x)=\int_0^\infty\int_{\delta_w\Omega}\phi(y)\int_{\closure_w\Omega}p_s(x,y)\,d\mu_\Lambda(x)\,d\mu_\Lambda(y)\,ds\,,
\]
and so
\[
\int_{\closure_w\Omega}u(x)\,d\mu_\Lambda(x)=0\,,
\]
since $\phi$ is centred, and
\[
\int_{\closure_w\Omega}p_s(x,y)\,d\mu_\Lambda(y)=1\,,
\]
which can be used since $p_s(x, y)$ is symmetric in $x,y\in\closure\Omega$, cf.\ (\ref{hk_symmetric}). We show that
$u$ weakly solves the Neumann problem by an application of Corollary \ref{Existence_NBVP}.
Let $v\in\mathcal{N}$ be arbitrary. We have to show
\[
\langle u,\Delta_\Omega v\rangle_{L^2(\closure_w\Omega,\mu_\Lambda)}=\int_{\delta_w\Omega}\phi(x)v(x)\,d\mu_\Lambda(x)\,.
\]
The uniform convergence of $u_t$ to $u$ yields
\[
\langle u,\Delta_\Omega v\rangle_{L^2(\closure_w\Omega)}=\lim\limits_{t\to\infty}\langle u_t,\Delta_\Omega v\rangle_{L^2(\closure_w\Omega,\mu_\Lambda)}\,.
\]
On the other hand,
\begin{align*}
\langle u_t,\Delta_\Omega v\rangle_{L^2(\closure_w\Omega,\mu_\Lambda)}&=
\int_{\delta_w\Omega}\int_0^t\phi(y)\int_{\closure_w\Omega}p_s(x,y)\Delta_\Omega v(x)\,d\mu_\Lambda(x)\,d\mu_\Lambda(y)\,ds
\\
&=\int_{\delta_w\Omega}\int_0^t\phi(y)\langle p_s(\cdot,y),\Delta_\Omega v\rangle_{L^2(\closure_w\Omega,\mu_\Lambda)}\,d\mu_\Lambda(y)\,ds\,.
\end{align*}
Using of the self-adjointness of $\Delta_\Omega$ and the heat equation (\ref{heatEquation}), obtain
\[
\langle p_s(\cdot,y),\Delta_\Omega v\rangle_{L^2(\closure_w\Omega,\mu_\Lambda)}=-\left\langle
\frac{d}{dt}p_t(\cdot,y)|_{t=s}
\right\rangle_{L^2(\closure_w\Omega,\mu_\Lambda)}\,,
\]
and  therefore
\begin{align*}
\langle p_s&(\cdot,y),\Delta_\Omega v\rangle_{L^2(\closure_w\Omega,\mu_\Lambda)}
\\
&=-\int_{\delta_w\Omega}\int_0^t\int_{\closure_w\Omega}\phi(y)\left(\frac{d}{dt}p_t(\cdot,y)|_{t=s}\right)(x)v(x)\,d\mu_\Lambda(x)\,d\mu_\Lambda(y)\,ds
\\
&=\int_{\delta_w\Omega}\int_{\closure_w\Omega}\phi(y)(p_0(x,y)-p_t(x,y))v(x)\,d\mu_\Lambda(x)\,d\mu_\Lambda(y)\,.
\end{align*}
By Lebesgue’s theorem and (\ref{ultracontractive_ineq}), it now follows that
\begin{align*}
\lim\limits_{t\to\infty}&\int_{\delta_w\Omega}\int_{\closure_w\Omega}\phi(y)p_t(x,y)v(x)\,d\mu_\Lambda(x)\,d\mu_\Lambda(y)
\\
&=\int_{\delta_w\Omega}\phi(y)\,d\mu_\Lambda(y)\frac{1}{\mu_\Lambda(\closure_w\Omega)}\int_{\closure_w\Omega}v(x)\,d\mu_\Lambda(x)\,,
\end{align*}
so that
\begin{align*}
\langle u,\Delta_\Omega v \rangle_{L^2(\closure_w\Omega,\mu_\Lambda)}&=\int_{\delta_w\Omega}\int_{\closure_w\Omega}\phi(y)p_0(x,y)v(x)\,d\mu_\Lambda(x)\,d\mu_\Lambda(x)
\\
&=\int_{\delta_w\Omega}\phi(y)v(y)\,d\mu_\Lambda(y)\,.
\end{align*}
The result now follows from Corollary \ref{Existence_NBVP}.
\end{proof}

\section{Locating zeros and poles of analytic functions}

Let 
$f\colon X\to K$ be a holomorphic function. The goal here is to identify the zero locus $V(f)\subset X$ via an NVP, as well as this is possible.
First, adapt the kernel function by incorporating $f$:
\[
\tilde{w}(x,y)=w(x,y)\absolute{f(y)}\,.
\]
This amounts to replacing $d\mu_\Lambda$ with the following measure:
\[
d\nu_f(y)=\absolute{f(y)}\,d\mu_\Lambda(y)\,,
\]
and thus obtain a new operator
\begin{align*}
\Delta_{w,f,\Omega} u(x)&=\int_{\closure_w\Omega}\tilde{w}(x,y)(u(x)-u(y))\,d\mu_\Lambda(y)
\\
&=\int_{\closure_w\Omega}
w(x,y)(u(x)-u(y))\absolute{f(y)}\,d\mu_\Lambda(y)
\\
&=\int_{\closure_w\Omega}w(x,y)(u(x)-u(y))\,d\nu_f(y)\,.
\end{align*}
The
specific case:
\[
w(x,y)=d_\Lambda(x,y)^{-n\alpha}\,,
\]
whose corresponding operator is denoted as $\Delta^\alpha_{f,\Omega}$,
is of concern in what follows.

\begin{Proposition}\label{smallWaveletEigenvalue}
The wavelets $\psi$ on $\closure_w\Omega$ supported in some sufficiently small
$B(a)\subset X\setminus V(f)$ are eigenfunctions of $\Delta_{f,\Omega}^\alpha$ with eigenvalue
\[
\lambda_\psi=\int_{\closure_w\Omega\setminus B(a)}d_\Lambda(x,y)^{-\alpha}\absolute{f(y)}\,d\mu_\Lambda(y)
+\nu_f(B(a))^{-n\alpha}\left(1-q^{-n}(1+(-1)^n\right)
\]
and is independent of $x\in B(a)$.
\end{Proposition}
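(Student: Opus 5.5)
We follow the standard computation for wavelets on small balls from \cite{DiffMfp} and \cite{HearingSerre}, with $\mu_\Lambda$ replaced by $\nu_f$ on the ball. The key input is that $f$ has no zeros near $B(a)$. Since $B(a)\subset X\setminus V(f)$ and $f$ is analytic, after shrinking $B(a)$ to lie in one chart (using the integral structure $\Lambda$, so balls go to balls of equal radius), $\absolute{f}$ is constant on $B(a)$. Indeed, near a non-zero $f(a)$ the ultrametric inequality gives $\absolute{f(y)}=\absolute{f(a)}$ for $y$ close to $a$. Hence on $B(a)$ the measure $\nu_f$ is just a constant multiple $\absolute{f(a)}$ of $\mu_\Lambda$. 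This is what "sufficiently small" will mean.

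Take a wavelet $\psi$ supported in $B(a)$, i.e.\ a function with mean zero on $B(a)$ that is constant on the maximal proper subballs. Fix $x\in\closure_w\Omega$ and split
\[
\Delta^\alpha_{f,\Omega}\psi(x)=\int_{\closure_w\Omega\setminus B(a)}w(x,y)(\psi(x)-\psi(y))\,d\nu_f(y)+\int_{B(a)}w(x,y)(\psi(x)-\psi(y))\,d\nu_f(y).
\]
For $x\notin B(a)$ the first term vanishes because $\psi(y)=0$ there. For the second term, $d_\Lambda(x,y)$ is constant in $y\in B(a)$ by the ultrametric property, and $\int_{B(a)}\psi\,d\nu_f=\absolute{f(a)}\int\psi\,d\mu_\Lambda=0$. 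So $\Delta\psi(x)=0=\lambda\psi(x)$ outside the ball.

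For $x\in B(a)$ the first term equals $\psi(x)\int_{\closure_w\Omega\setminus B(a)}d_\Lambda(x,y)^{-n\alpha}\absolute{f(y)}\,d\mu_\Lambda(y)$. Here $d_\Lambda(x,y)=d_\Lambda(a,y)$ for $y\notin B(a)$, so this integral is independent of $x\in B(a)$ and gives the first summand of $\lambda_\psi$. The exponent written there should be read as the one coming from $w$.

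The second term is the local one. We split $B(a)$ into the $q^n$ maximal subballs $B_j$, one of which, $B_0$, contains $x$. On $B_j$ with $j\neq 0$ the distance $d_\Lambda(x,y)$ equals the radius $r$ of $B(a)$. On $B_0$ we recurse, or use that $\psi$ is constant on $B_0$, so the integrand vanishes there. This gives
\[
r^{-n\alpha}\Bigl(\psi(x)\,\nu_f(B(a)\setminus B_0)-\sum_{j\ne0}\psi|_{B_j}\,\nu_f(B_j)\Bigr)=r^{-n\alpha}\,\psi(x)\,\nu_f(B(a)),
\]
where the last step uses mean zero, $\sum_j\psi|_{B_j}\nu_f(B_j)=0$. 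Finally we express $r$ through $\nu_f(B(a))=\absolute{f(a)}\,\mu_\Lambda(B(a))$ and $\mu_\Lambda(B(a))=r^n$ (up to normalisation). The constant factor $1-q^{-n}(1+(-1)^n)$ in the statement should come out of this bookkeeping together with the chosen normalisation of $d_\Lambda$ versus the measure. Both summands are independent of $x$, so $\psi$ is an eigenfunction.

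The main obstacle is this last step: reconciling the exact constant and exponent in the stated formula with the direct computation, which depends on the conventions for $d_\Lambda$, $w=d_\Lambda^{-n\alpha}$, and how $\nu_f(B(a))$ replaces the radius. We would first redo the computation in a single chart $O_K^n$ with Haar measure, to pin down the normalisation, and then transport it via the equalising property. A secondary point is justifying the constancy of $\absolute{f}$ on small balls away from $V(f)$ uniformly for the chosen $B(a)$.
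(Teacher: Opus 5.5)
Your computation is correct and follows the paper's skeleton. Off $B(a)$, $\Delta^\alpha_{f,\Omega}\psi$ vanishes by ultrametricity and the zero $\nu_f$-mean of $\psi$. For $x\in B(a)$, the far part gives the integral term, which, as you note, must carry the exponent $-n\alpha$ and depends only on $a$. The near part lives on $B(a)\setminus B_0$, where $d_\Lambda(x,y)=r$.

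The two routes differ only in how the near part is evaluated:
\begin{itemize}
\item you use the mean-zero property directly;
\item the paper inserts the explicit Kozyrev form $\psi(\phi(x)+\pi^k\tau(\ell)+\epsilon)=\chi(\pi^{-1}\tau(j)\tau(\ell))\,\psi(\phi(x))$ and reduces to the character sum in the identity marked $(**)$.
\end{itemize}
Your continuity argument for the constancy of $\absolute{f}$ on a small ball is simpler than the paper's appeal to Weierstrass preparation. The same argument applies to the density $\absolute{g_U}$ of $\mu_\Lambda$, which you also need. No uniformity in $a$ is required for this statement.

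The step that fails is your last one: the factor $1-q^{-n}(1+(-1)^n)$ and the power $\nu_f(B(a))^{-n\alpha}$ will not come out of any bookkeeping. Since $j\neq 0$, the map $\ell\mapsto\chi(\pi^{-1}\tau(j)\tau(\ell))$ is a nontrivial character of $\mathds{F}_q^n$. Its vanishing total sum is exactly the mean-zero property (\ref{waveletZeroMean}) that the paper itself uses. Hence
\[
\sum_{\ell\in\mathds{F}_q^n\setminus\mathset{0}}\left(1-\chi(\pi^{-1}\tau(j)\tau(\ell))\right)=(q^n-1)+1=q^n ,
\]
not $q^n-(1+(-1)^n)$; the two agree only for odd $n$.

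With $q^n$ inserted, the paper's own line becomes $Cq^{kn\alpha-n(k+1)}\,q^n\,\psi(x)=q^{kn\alpha}\,\nu_f(B(a))\,\psi(x)$. This is exactly your $r^{-n\alpha}\nu_f(B(a))\psi(x)$ with $r=q^{-k}$.

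Independently of the constant, the true local term scales like $r^{n-n\alpha}$ in the radius, whereas $\nu_f(B(a))^{-n\alpha}$ scales like $r^{-n^2\alpha}$. So no fixed normalisation of $d_\Lambda$ or $\mu_\Lambda$ can reconcile them.

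What your argument proves, and what is actually true, is
\[
\lambda_\psi=\int_{\closure_w\Omega\setminus B(a)}d_\Lambda(a,y)^{-n\alpha}\,d\nu_f(y)+q^{kn\alpha}\,\nu_f(B(a))
\]
for $B(a)$ of radius $q^{-k}$. You should state this corrected eigenvalue and flag the discrepancy with the printed formula, rather than expect the printed constant to emerge.
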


\begin{proof}
Since $B(a)\cap V(f)=\emptyset$ and $B(a)$ is small, the important vanishing integral property
\[
\int_{\closure_w\Omega}\psi(x)\,d\nu(x)=0
\]
holds true, which can be seen as follows: locally, on a chart $U\subset \closure_w\Omega$ containing $B(a)$, we have
\begin{align}\label{polynomialMeasure}
d\nu(x)=\absolute{g_U(x)}\absolute{f(x)}\absolute{dx}=\absolute{g_U(x)f(x)}\absolute{dx}
=\absolute{P_U(x)}\absolute{dx}\,,
\end{align}
where $P_U(x)$ is a polynomial with coefficients in $K$ in $n$ variables, according to the Weierstrass preparation theorem. 
Notice that, by the
strict triangle inequality, this implies that
\[
z\mapsto \absolute{P(z)}
\]
defines a locally constant function on $U$ outside the zeros of $f$. Hence, if the the $p$-adic ball
$B(a)\subset U$ is sufficiently small, then $\absolute{P(z)}$ is constant on $B(a)$. In this
case,
\begin{align}\nonumber
\int_{\closure_w\Omega}\psi(x)\,d\nu_f(y)&=
\int_{\closure_w\Omega}\psi(x)\absolute{f(x)}{d\mu_\Lambda(x)}
\\\label{waveletZeroMean}
&=\int_U\psi(x)\absolute{f(x)g_U(x)}\absolute{dx}
=\tilde{C}\int_U\psi(x)\absolute{dx}=0
\end{align}
for some $\tilde{C}>0$, where the latter equality holds true, as
the vanishing of the Kozyrev wavelet on any measurable
set containing its support is well known, for example, cf.\ \cite[Theorem 2.3.9]{XKZ2018}. Notice that, in the context of the sloppy notation, the set $U$ is identified with its image in $K^n$ under the chart map, and $\psi$ is there identified with a Kozyrev wavelet.

\smallskip
For the eigenvalue formula, first observe that the independence of the
choice of $x\in B(a)$ is immediate. Next, observe that this formula is an extension to higher dimension of the one in \cite[Theorem 3]{Kozyrev2004}, and the proof of that uses only ultrametricity within the support of the wavelet. So, in this case, if $x \in \closure_w\Omega\setminus B(a)$, then
\begin{align}\nonumber
\Delta^\alpha_{f,\Omega}\psi(x)&=\int_{\closure_w\Omega}d_\Lambda(x,y)^{-n\alpha}(\psi(x)-\psi(y))\,d\nu_f(y)
\\\label{useVanishInt}
&=\psi(x)\int_{\closure_w\Omega} d_\Lambda(x,y)^{-n\alpha}\,d\nu_f(y)
-d_\Lambda(x,B(a))^{-n\alpha}\int_{\closure_w\Omega}\psi(x)\,d\nu_f(y)
\\\nonumber
&=0\,,
\end{align}
because  the latter integral vanishes by (\ref{waveletZeroMean}), and as $\psi(x)=0$. If $x\in B(a)$, then
\begin{align*}
\Delta^\alpha_{f,\Omega}\psi(x)&=\psi(x)\int_{X\setminus B(a)}d\Lambda(x,y)^{-n\alpha}\,d\nu_f(y)
+\int_{B(a)}d_\Lambda(x,y)^{-n\alpha}(\psi(x)-\psi(y))\,d\nu_f(y)\,.
\end{align*}
The latter integral equals
\begin{align*}
I&=\int_{\partial B(a)}d_\Lambda(x,y)^{-n\alpha}(\psi(x)-\psi(y))\absolute{f(y)}\,d\mu_\Lambda(y)\,.
\end{align*}
Again, since $B(a)$ is small, on a local chart $(U,\phi)$ containing $B(a)$, equality (\ref{polynomialMeasure}) shows that
\[
d\nu_f(y)|_{B(a)}=C\absolute{dy}
\]
for some $C>0$. Hence,
\begin{align*}
I&=C\int_{\partial \phi(B(a))}\norm{\phi(x)-z}_{K^n}(\psi_{\phi(B(a)),j}(x)-\psi_{\phi(B(a)),j}(y))\absolute{dy}\,,
\end{align*}
where $\psi_{\phi(B(a)),j}$ is a Kozyrev wavelet supported in $\phi(B(a))$, and depending on $j\in\mathds{F}_q^\times$. Assume that the radius of $B(a)$ is $q^{-k}$. Then points $z\in\partial\phi(B(x))$ have the form
\[
z=\phi(x)+u+\epsilon
\]
with $u=(u_1,\dots,u_n)\in\pi^kO_K^n$ such that for $i\in\mathset{1,\dots,n}$:
\[
u_i=\pi^k\tau_i(\ell_i)\,,\quad\ell_i\in\mathds{F}_q
\]
and $\ell_i\in\mathds{F}_q^\times$
 for at least one $i\in\mathset{1,\dots,n}$, and with $\epsilon\in\pi^{k+1}O_K^n$. 
 The maps
 \[
\tau_i\colon \mathds{F}_q\to O_K
 \]
 are lifts of the canonical map $O_K\to\mathds{F}_q$ in the coordinates $i=1,\dots,n$, forming the map $\tau\colon \mathds{F}_q^n\to O_K^n$. Hence,
 \begin{align*}
I&=C\sum\limits_{i=1}^n\sum\limits_{\ell=(\ell_1,\dots,\ell_n)\in\atop\mathds{F}_q\times\dots\times\mathds{F}_q^\times\times\dots\times\mathds{F}_q}
\int_{\pi^{k+1}O_K^n}\norm{\tau(\ell)+\epsilon}^{-n\alpha}
\left[\psi_{\phi(B(a)),j}(\phi(x))\right.
\\
&\quad\left.-\psi_{\phi(B(a)),j}(\phi(x)+\tau(\ell)+\epsilon)\right]\,\absolute{d\epsilon}
\\
&=Cq^{\alpha nk-n(k+1)}\sum\limits_{\ell\in\mathds{F}_q^n\setminus\mathset{\bar{0}}}\left[\psi_{\phi(B(a)),j}(\phi(x))-\psi_{\phi(B(a)),j}(\phi(x)+\tau(\ell))\right]
\\
&\stackrel{(*)}{=}
Cq^{\alpha nk-n(k+1)}\sum\limits_{\ell\in\mathds{F}_q^n\setminus\mathset{\bar{0}}}
\left[1-\chi(\pi^{-1}\tau(j)\tau(\ell))\psi_{\phi(B(a)),j}(\phi(x))\right]
\\
&\stackrel{(**)}{=}
Cq^{\alpha nk-n(k+1)}\left(q^n-(1+(-1)^n)\right)\psi_{\phi(B(a)),j}(\phi(x))
\\
&=C\mu(\phi(B(a)))^{n-\alpha}\left(1-q^{-n}(1+(-1)^n)\right)\psi_{\phi(B(a)),j}(\phi(x))
\\
&=\nu_f(B(a))^{n-\alpha}\left(1-q^{-n}(1+(-1)^n)\right)\psi(x)\,,
 \end{align*}
 where $(*)$ follows by inspecting the Kozyrev wavelets, and $(**)$ from
 \[
\sum\limits_{\ell\in\mathds{F}_q^n\setminus\mathset{\bar{0}}}\left(1-\chi(\pi^{-1}\tau(j)\tau(\ell))\right)
=q^n-(1+(-1)^n)\,,
 \]
 as can be seen by induction. This now implies the asserted eigenvalue.
\end{proof}

In the proof of Proposition \ref{smallWaveletEigenvalue}, it was deemed helpful to use in the end a less sloppy notation for integrating over differential forms than is often the case.

\begin{example}
Let $n=1$, $K=\mathds{Q}_p$ with $p\ge3$. Let $X=\mathds{Z}_p\cup (p^{-1}+\mathds{Z}_p)$, on which the differential $1$-form
\[
\omega(x)=\begin{cases}
(x-p^2)(x+p^2)\,dx,&x\notin B_3(p^2)\cup B_3(-p^2)
\\
dx,&\text{otherwise}
\end{cases}
\]
is defined,
and let 
\[
\Delta_\omega^\alpha u(x)
=\int_{\mathds{Z}_p}\absolute{x-y}^{-\alpha}(u(x)-u(y))\,\absolute{\omega(x)}
\] 
be, up to a constant factor, the Vladimirov operator on $\mathds{Z}_p$.
The Kozyrev wavelet
\[
\psi(x)=e^{\{i p^{-1}x\}_p}1_{\mathds{Z}_p}(x)\,,
\]
where 
\[
\{ \alpha_{-m}p^{-m}+\alpha_{-m+1}p^{-m+1}+\dots\}_p=\sum\limits_{j=-m}^{-1}\alpha_jp^j\in\mathds{Q}\,,\quad\alpha_j\in\mathset{0,\dots,p-1}\,,
\]
is the principal part of a $p$-adic number,
is not an eigenfunction of $\Delta_\omega^\alpha$.
\end{example}

\begin{proof}
We check  that
\begin{align*}
\int_{\mathds{Z}_p}\psi(x)\absolute{\omega(x)}
&=\int_{B_3(p^2)}1\absolute{dx}
+\int_{B_3(-p^2)}1\absolute{dx}
\\
&+\int_{\absolute{x}=1}\psi(x)\absolute{dx}
+\int_{\absolute{x}=p^{-1}}1\absolute{dx}
+\underbrace{\int_{H}1\absolute{x-p^2}_p\absolute{x+p^2}_p\absolute{dx}}_{=I}
\\
&=p^{-2}+p^{-2}-p^{-1}+p^{-1}(1-p^{-1})+I
\\
&=I+p^{-2}\,,
\end{align*}
where
\[
H=\mathset{\absolute{x}=p^{-2}}\setminus (B_3(p^2)\cup B_3(-p^2))\,.
\]
Since $I>0$, it follows that
\[
\int_{\mathds{Z}_p}\psi(x)\absolute{\omega(x)}\neq0\,.
\]
Hence, $\psi$ is not an eigenfunction of $\Delta_\omega^\alpha$, because
\[
\Delta_\omega^\alpha\psi(x)=-I\neq0
\]
for $x\in
X\setminus\mathds{Z}_p$
by the calculation in (\ref{useVanishInt}).
\end{proof}

Since, according to Example \ref{geodistUltracont}, the semigroup $e^{-t\Delta_\Omega}$ is ultracontractive, and $\nu_f(X)<\infty$, Remark \ref{ultracontractivityProperties}.2.\ says that the spectrum of $\Delta_{f,\Omega}^\alpha$ is a point spectrum, and thus its heat kernel can be written as
\[
p_t^\Omega(x,y)=\sum\limits_\psi e^{-\lambda_\psi t}\psi(x)\overline{\psi(y)}\,,
\]
where $\psi$ runs through an orthonormal basis of $L^2(\closure_w\Omega,\nu_f)$ consisting of eigenfunctions of $\Delta_{f,\Omega}^\alpha$.
The corresponding Green function is then of this form:
\[
G_{\Omega,f}(x,y)=\sum\limits_{\psi\colon\lambda_\psi>0}\frac{1}{\lambda_\psi}\psi(x)\overline{\psi(y)}
\]
for $x,y\in\closure_w\Omega$. As an application, solve the following Neumann Boundary Value Problem for $(\phi_k,\Omega_{a,k})$:
\begin{align}\label{NBVP_f}
\Delta_{f,\Omega_{a,k}}^\alpha u|_{\Omega_{a,k}}=0,\quad N_{\delta_w\Omega}u|_{\delta_w\Omega}u=\phi
\end{align}
with
\begin{align*}
\Omega_{a,k}&=X\setminus (V(f)\cup B_k(a)),&
B_k(a)\subset X\setminus V(f)
\\
\delta_w\Omega_{a,k}&=B_k(a)\cup V(f)
\\
\phi_k(x)&=\psi_{B_k(a),j}(x)&\text{(wavelet with parameter $j\in\left(\mathds{F}_p^\times\right)^n$).}
\end{align*}
Its solution is
\[
u(x)=\int_{\delta_w\Omega}\phi(y)G_{\Omega,f}(x,y)\,d\nu(y)=\frac{1}{\lambda_{\psi_{B_k(a)}}}\psi_{B_k(a)}(x)\in\dom(\mathcal{E}_{f,\closure_w\Omega})\,,
\]
cf.\ Theorem \ref{SolutionOfNBVP_p}.2., where the used Dirichlet form $\mathcal{E}_{f,\closure_w\Omega}$ uses the Radon measure $\nu_f$. Notice that the semigroup $e^{-t\Delta_{f,\Omega,}^\alpha}$ is ultracontractive for $\alpha>0$, just like in Example \ref{geodistUltracont}.
\newline

The decay of 
$\norm{u}_\infty\to 0$ for the solution $u\in\dom(\mathcal{E}_{f,\closure_w\Omega})$ of (\ref{NBVP_f}) tells us how close $a$ is to $V(f)$:

\begin{thm}[Closeness to zeros]
The norm $\norm{u}_\infty$ of the solution $u\in\dom(\mathcal{E}_{f,\closure_w\Omega})$ of the Neumann boundary value problem (\ref{NBVP_f})
informs for varying $a\in X\setminus V(f)$ about whether or not $a$ is approaching some $z_0\in V(f)$ of order $r>0$.
\end{thm}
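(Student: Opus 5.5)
The plan is to compute $\norm{u}_\infty$ explicitly from the closed form of the solution already obtained via Theorem \ref{SolutionOfNBVP_p}.2, namely
\[
u=\lambda_{\psi_{B_k(a)}}^{-1}\,\psi_{B_k(a)}\,,
\]
and then study how the eigenvalue $\lambda_{\psi_{B_k(a)}}$ depends on the centre $a$. Since a Kozyrev wavelet supported in $B_k(a)$ is, up to normalisation in $L^2(\closure_w\Omega,\nu_f)$, of absolute value $\nu_f(B_k(a))^{-1/2}$ on its support, we get
\[
\norm{u}_\infty=\lambda_{\psi_{B_k(a)}}^{-1}\,\nu_f(B_k(a))^{-1/2}.
\]
So the first step is to fix $k$ large enough that Proposition \ref{smallWaveletEigenvalue} applies to $B_k(a)$, uniformly as $a$ varies in a compact region avoiding $V(f)$. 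We then insert the eigenvalue formula
\[
\lambda_\psi=\int_{\closure_w\Omega\setminus B_k(a)}d_\Lambda(a,y)^{-n\alpha}\absolute{f(y)}\,d\mu_\Lambda(y)+\nu_f(B_k(a))^{-n\alpha}\bigl(1-q^{-n}(1+(-1)^n)\bigr).
\]
This formula is independent of $x\in B_k(a)$, so $x=a$ may be used.

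Next we localise near a zero $z_0\in V(f)$ of order $r$. In a chart $U$ around $z_0$ we use the Weierstrass preparation form (\ref{polynomialMeasure}), $d\nu_f=\absolute{P_U(x)}\absolute{dx}$, with $P_U$ vanishing to order $r$ at $z_0$. From this we extract the asymptotics $\absolute{f(y)}\asymp \norm{y-z_0}^{r}$ along generic directions. The main consequence is that, for $B_k(a)$ small relative to $\delta:=d_\Lambda(a,z_0)$, the strict triangle inequality makes $\absolute{f}$ constant on $B_k(a)$ with value $\asymp\delta^{r}$. Hence
\[
\nu_f(B_k(a))\asymp \delta^{r}q^{-nk}.
\]
The natural choice is to let $k=k(a)$ grow so that $B_k(a)$ has radius comparable to, but smaller than, $\delta$. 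This keeps Proposition \ref{smallWaveletEigenvalue} applicable, with $B_k(a)\cap V(f)=\emptyset$.

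We then estimate both summands of $\lambda_\psi$ in terms of $\delta$. The boundary-term summand $\nu_f(B_k(a))^{-n\alpha}$ behaves like $\delta^{-n\alpha(r+n)}$ up to constants. The integral summand is dominated by the contribution of the annuli around $a$ of radius between $q^{-k}$ and $\delta$. Summing over these ultrametric spheres, where $\absolute{f}$ is again essentially constant, gives a power of $\delta$ (or a logarithm in a borderline case $n\alpha = n+r$). Combining the two estimates yields
\[
\norm{u}_\infty\asymp \delta^{\beta(r,n,\alpha)}
\]
for an explicit exponent $\beta$ that is strictly increasing in $r$. Thus the decay rate of $\norm{u}_\infty$ as $a\to z_0$ determines the order $r$. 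Conversely, if $a$ stays at distance bounded below from $V(f)$, then $\nu_f(B_k(a))$ and the integral stay bounded away from $0$ and $\infty$ for fixed $k$, so $\norm{u}_\infty$ stays bounded away from $0$. This distinguishes approaching a zero from not approaching one.

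The main obstacle is the uniformity in the asymptotics of $\absolute{P_U}$ near $z_0$. For $n>1$, $V(f)$ is a hypersurface, and $\absolute{f(y)}$ depends on $d_\Lambda(y,V(f))$ and on the direction of approach, not only on $\norm{y-z_0}$. To handle this, I would first treat the case of a smooth point of $V(f)$, changing coordinates analytically so that $f=x_1^r\cdot(\text{unit})$, and take the statement's ``order $r$'' to mean this local normal form. I would then measure closeness by $\delta=d_\Lambda(a,V(f))$, where the computation above is just a one-variable computation in $x_1$ times a volume factor. Singular points of $V(f)$ I would leave aside or treat only through upper and lower bounds.
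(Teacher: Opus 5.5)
Your skeleton is the paper's: $u=\lambda_\psi^{-1}\psi_{B_k(a)}$, Proposition~\ref{smallWaveletEigenvalue}, and $\nu_f(B_k(a))\asymp\delta^r\mu_\Lambda(B_k(a))$. Two of your additions improve on it. You actually estimate the integral summand, which the paper's $\lambda_\psi\in O(\nu_f(B_k(a))^{-n\alpha})$ takes for granted. You also measure closeness by $d_\Lambda(a,V(f))$ when $n>1$.

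But the step ``$\beta$ is strictly increasing in $r$'' fails in your own set-up. Normalising $\psi$ in $L^2(\closure_w\Omega,\nu_f)$ brings in the factor $\nu_f(B_k(a))^{-1/2}$, which competes with $\lambda_\psi^{-1}\asymp\nu_f(B_k(a))^{n\alpha}$. The boundary summand does dominate here: the integral summand is only $O(1)$, $O(\log\delta^{-1})$ or $O(\delta^{n+r-n\alpha})$. The first two of these come from radii between $\delta$ and $1$, not from the boundedly many annuli between $q^{-k}$ and $\delta$. With $q^{-k}\asymp\delta$ you have $\nu_f(B_k(a))\asymp\delta^{r+n}$, hence
\[
\norm{u}_\infty\asymp\delta^{(r+n)(n\alpha-\frac12)}\,.
\]
For $n\alpha<\frac12$ the exponent decreases in $r$ and $\norm{u}_\infty\to\infty$, so your dichotomy (tending to $0$ versus staying away from $0$) does not separate the regimes. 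For $n\alpha=\frac12$, which is admissible since ultracontractivity is asserted for every $\alpha>0$, the exponent vanishes. Then $\norm{u}_\infty$ stays bounded above and below whether or not $a$ approaches $V(f)$, and the argument detects nothing.

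The paper's proof instead takes $\norm{u}_\infty=1/\lambda_\psi$, i.e.\ a boundary wavelet of modulus one on its support. This gives $\delta^{rn\alpha}$ for fixed $k$, or $\delta^{n\alpha(r+n)}$ in your scaling, and both exponents increase in $r$ for all $\alpha>0$. You should adopt that normalisation or restrict to $n\alpha>\frac12$.

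Your other departure is letting $k=k(a)$ grow so that $q^{-k}\asymp\delta=d_\Lambda(a,V(f))$. This rightly spots something the paper passes over. With $k$ fixed, Proposition~\ref{smallWaveletEigenvalue} applies only while $\absolute{a-z_0}>q^{-k}$, since otherwise $z_0\in B_k(a)$. So the paper's rate $(q^{-n\alpha})^{rn_a}$ is only visible for $n_a<k$.

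But coupling $k$ to $\delta$ means the Neumann problem (\ref{NBVP_f}) you solve already encodes the very distance that $\norm{u}_\infty$ is supposed to reveal. The ``whether or not $a$ is approaching some $z_0$'' part of the statement then no longer comes from $\norm{u}_\infty$. The paper keeps $k$ fixed and reads off the exponential rate in $n_a$. If you want a genuine limit statement, take it in that form: fixed $k$ on the range $n_a<k$, then $k\to\infty$. Your two-sided eigenvalue estimate and the $d_\Lambda(a,V(f))$ treatment carry over to that version.
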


\begin{proof}
Let $z_0\in V(f)$ be of order $r>0$, and assume that $a\in X$ is closer to $z_0$ than to any other point in $V(f)$. Then
\[
\nu_f(B(a))=C(a)\absolute{a-z_0}^r q^{-k}
=C(a)q^{-(k+rn_a)}
\]
for some $C(a)>0$ bounded from below, where
$\absolute{a-z_0}=q^{-n_a}$.
According to Proposition \ref{smallWaveletEigenvalue},
\[
\lambda_{\psi_{B_k(a)},j}\in O(\nu_f(B_k(a))^{-n\alpha})\,.
\]
Hence, $\norm{u}_\infty=\frac{1}{\lambda_\psi}$ is of the order
\[
Cq^{-(k+rn_a)n\alpha}=Cq^{-kn\alpha}(q^{-n\alpha})^{rn_a}
\]
for some $C>0$.
This means that for fixed radius of a small ball $B_k(a)$, the (existent or non-existent) rate of the exponential decay of the solution $\norm{u}_\infty$ informs about an approximate location of $z_0\in V(f)$, as well as its order $r>0$, as $a$ approaches $z_0$ (or not). 
\end{proof}

\section*{Acknowledgements}
Peer Kunstmann,
\'Angel M\'oran Ledezma and David Weisbart are warmly thanked for fruitful discussions.
This research is partially supported by
the Deutsche Forschungsgemeinschaft under project number 469999674.

\bibliographystyle{plain}
\bibliography{biblio}

\end{document}